\newtheorem{thm}{Theorem}[section]
\newtheorem{cor}[thm]{Corollary}
\newtheorem{lem}{Lemma}[section]
\newtheorem{prop}{Proposition}[section]
\theoremstyle{definition}
\newtheorem{defn}{Definition}[section]
\newtheorem{asm}{Assumption}
\theoremstyle{remark}
\newtheorem{rem}{Remark}[section]
\numberwithin{equation}{section}
\numberwithin{equation}{section}
\newcounter{saveeqn}
\def\nm{\noalign{\medskip}}
\newcommand{\eqnref}[1]{(\ref {#1})}
\newcommand{\bE}{\mathbf{E}}
\newcommand{\bJ}{\mathbf{J}}
\newcommand{\bF}{\mathbf{F}}
\newcommand{\bG}{\mathbf{G}}
\newcommand{\bH}{\mathbf{H}}
\newcommand{\Bx}{\mathbf{x}}
\newcommand{\By}{\mathbf{y}}
\newcommand{\BU}{\mathbf{U}}
\newcommand{\Gl}{\lambda}
\newcommand{\Gs}{\sigma}
\newcommand{\GL}{\Lambda}
\newcommand{\Acal}{\mathcal{A}}
\newcommand{\Kcal}{\mathcal{K}}
\newcommand{\Lcal}{\mathcal{L}}
\newcommand{\Scal}{\mathcal{S}}
\newcommand{\Mcal}{\mathcal{M}}
\newcommand{\Ocal}{\mathcal{O}}
\newcommand{\Vcal}{\mathcal{V}}
\newcommand{\ds}{\displaystyle}
\newcommand{\la}{\langle}
\newcommand{\ra}{\rangle}
\newcommand{\RR}{\mathbb{R}}
\newcommand{\p}{\partial}
\newcommand{\beq}{\begin{equation}}
\newcommand{\eeq}{\end{equation}}
\DeclareMathAlphabet{\itbf}{OML}{cmm}{b}{it}
\title[On an inverse boundary problem arising in brain imaging]{On an inverse boundary problem arising in brain imaging}
\author{Youjun Deng}
\address{School of Mathematics and Statistics, Central South University, Changsha, Hunan, P. R. China.}
\email{youjundeng@csu.edu.cn, dengyijun\_001@163.com}
\author{Hongyu Liu}
\address{Department of Mathematics, Hong Kong Baptist University, Kowloon, Hong Kong SAR}
\email{hongyu.liuip@gmail.com}
\author{Gunther Uhlmann}
\address{Department of Mathematics, University of Washington, Seattle, WA 98195, USA/ Jockey Club Institute for Advanced Study, HKUST, Hong Kong SAR / Department
of Mathematics and Statistics, University of Helsinki, Finland}
\email{gunther@math.washington.edu}
\date{} 
\begin{document}
\maketitle

\begin{abstract}

We consider the inverse problem of recovering both an unknown electric current and the surrounding electromagnetic parameters of a medium from boundary measurements. This inverse problem arises in brain imaging. We show that under generic conditions one can recover both the source and the electromagnetic parameters if these
are piecewise constant and the current source is invariant in a fixed direction or a harmonic function.

\medskip

\noindent{\bf Keywords:}~~Maxwell system, inverse boundary problem, identifiability and uniqueness, electroencephalography and magnetoencephalography\smallskip

\noindent{\bf 2010 Mathematics Subject Classification:}~~35Q60, 31B10, 35R30, 78A40

\end{abstract}

\section{Introduction}

\subsection{Mathematical setup}\label{sect:1.1}
We consider the electromagnetic (EM) phenomenon due to an electric source located inside an inhomogeneous medium. Let $B$ be a bounded simply connected open set in $\mathbb{R}^3$ with a $C^{1,1}$-smooth boundary $\partial B$. Throughout we let $\Bx=(\Bx_j)_{j=1}^3\in\mathbb{R}^3$ denote the space variable and $t$ denote the time variable. Let $\mathcal{J}(\Bx, t): \mathbb{R}^3\times \mathbb{R}_+\rightarrow \mathbb{R}^3$ be a causal function such that $\mathcal{J}(\Bx, t)=0$ for $(\Bx, t)\in \mathbb{R}^3\backslash\overline{B}\times\mathbb{R}_+$. It models an electric current source supported inside $B$. We characterize the EM medium by the electric permittivity $\epsilon(\Bx)$, magnetic permeability $\mu(\Bx)$ and electric conductivity $\sigma(\Bx)$. $\epsilon(\Bx)$ and $\mu(\Bx)$ are positive scalar functions that are bounded below and above, and $\sigma(\Bx)$ is a nonnegative scalar function that is bounded above, such that $\epsilon(\Bx)=\epsilon_0$, $\mu(\Bx)=\mu_0$ and $\sigma(\Bx)=0$ for $\Bx\in\mathbb{R}^3\backslash\overline{B}$, where $\epsilon_0$ and $\mu_0$ are positive constants. $\epsilon_0$ and $\mu_0$ signify the EM parameters of the homogeneous background space $\mathbb{R}^3\backslash\overline{B}$. Let $\mathcal{E}$ and $\mathcal{H}$ be both $\mathbb{R}^3$-valued functions, which respectively model the electric and magnetic fields generated by the source current $\mathcal{J}$. We have the following Maxwell system,
\begin{equation}\label{eq:Maxwell2}
\begin{cases}
&\displaystyle{ -\epsilon(\mathbf{x}){\partial_t\mathcal{E}(\mathbf{x}, t)}+\nabla\times\mathcal{H}(\mathbf{x}, t)=\mathcal{J}(\mathbf{x}, t)+\sigma(\mathbf{x})\mathcal{E}(\mathbf{x}, t), }\medskip\\
&\displaystyle{\mu(\mathbf{x}) {\partial_t\mathcal{H}(\mathbf{x},t)}+\nabla\times\mathcal{H}(\mathbf{x}, t)=0\quad (\mathbf{x}, t)\in\mathbb{R}^3\times\mathbb{R}_+},\medskip\\
&\displaystyle{\mathcal{E}(\mathbf{x}, 0)=\mathcal{H}(\mathbf{x}, 0)=0,\quad \mathbf{x}\in\mathbb{R}^3,}.
\end{cases}
\end{equation}
Associated with \eqref{eq:Maxwell2}, we define a boundary measurement operator
\begin{equation}\label{eq:bop1}
\Pi_{\mathcal{J},\epsilon,\mu,\sigma}(\mathbf{x}, t)=(\mathcal{E},\mathcal{H})(\mathbf{x}, t)|_{(\mathbf{x}, t)\in\partial B\times\mathbb{R}_+},
\end{equation}
where $(\mathcal{E},\mathcal{H})$ is the pair of solutions to \eqref{eq:Maxwell2}. In this article, we are concerned with an inverse problem of simultaneously recovering $\mathcal{J}, \epsilon, \mu$ and $\sigma$ by knowledge of $\Pi_{\mathcal{J},\epsilon,\mu,\sigma}$, namely
\begin{equation}\label{eq:ip}
\Pi_{\mathcal{J},\epsilon,\mu,\sigma}(\mathbf{x}, t)\rightarrow (B; \mathcal{J}, \epsilon, \mu, \sigma).
\end{equation}
For the aforementioned inverse boundary problem, we aim to establish sufficient conditions for the following unique recovery result holding true
\begin{equation}\label{eq:uniqueness2}
\Pi_{\mathcal{J}_1,\epsilon_1,\mu_1,\sigma_1}=\Pi_{{\mathcal{J}}_2, \epsilon_2, \mu_2, \sigma_2}\Leftrightarrow (\mathcal{J}_1, \epsilon_1,\mu_1,\sigma_1)=({\mathcal{J}}_2, \epsilon_2, \mu_2, \sigma_2),
\end{equation}
where $(\mathcal{J}_1, \epsilon_1,\mu_1,\sigma_1)$ and $({\mathcal{J}}_2, \epsilon_2, \mu_2, \sigma_2)$ are two sets of admissible EM configurations.

\subsection{Background and motivation}

In the physical setting, $(B; \epsilon,\mu,\sigma)$ denotes an inhomogeneous EM medium that is embedded in a homogeneous background space $(\mathbb{R}^3\backslash\overline{B}; \epsilon_0, \mu_0)$. $\mathcal{J}$ signifies an electric current density that is located inside the body $(B; \epsilon,\mu,\sigma)$. The presence of the source $\mathcal{J}$ generates the EM fields $\mathcal{E}$ and $\mathcal{H}$ that propagate (nondestructively) outside the body $B$. The inverse problem \eqref{eq:ip} is concerned with the inference of the knowledge of the interior of the body $B$, which is a natural wave probing approach that has been widely adopted in many applications.

For the inverse problem \eqref{eq:ip} described above, if one assumes that the medium $(B; \epsilon,\mu, \sigma)$ is known,  but $\mathcal{J}$ is unknown, then the problem is usually referred to as an inverse source problem; see \cite{Bao1,Isa} and the references therein for relevant discussions. This inverse source problem is of particular significance to electroencephalography (EEG) and magnetoencephalography (MEG) (cf. \cite{Coh1,Coh2}), which are two important brain imaging methods. It is known that brain activity induces EM fields and the measurement of the induced EM fields can be used to understand the brain processes, which is actually modelled as the inverse source problem associated with \eqref{eq:ip}. We are aware of some recovery results in the literature associated with EEG and MEG\cite{AB,FGK,FKM}. However, these articles deal with the recovery of the source term, corresponding to the neuronal current inside the brain, without considering the simultaneous recovery of the source and the surrounding medium. The inverse source problem is linear. If one assumes that $\mathcal{J}$ is known, but the medium $(B; \epsilon,\mu, \sigma)$ is unknown, then the corresponding inverse problem \eqref{eq:ip} is usually referred to as an inverse medium problem (cf. \cite{CK,Isa,18,19,SyU,U,Uhl}). We would like to remark that the inverse medium problems considered in the literature are usually active in the sense that the measurement data are generated by sending EM fields from outside of the body and then by measuring the EM responses from outside of the body as well. In this paper, we consider the inverse problem \eqref{eq:ip} by assuming that both the source $\mathcal{J}$ and the surrounding medium $(B;\epsilon,\mu,\sigma)$ are unknown. It is easily verified that the proposed inverse problem is nonlinear. To our best knowledge, the inverse problem is new to the literature and the corresponding study becomes radically more challenging. According to our earlier discussion, it would be of practical interest in EEG and MEG if one intends to infer knowledge on both the brain activity and the brain tissue and then study their correlations by exterior EM measurements. In the current article, we shall be mainly concerned with the theoretical identifiability results, namely \eqref{eq:uniqueness2}. That is, given the measurement data set $\Pi_{\mathcal{J},\epsilon,\mu,\sigma}$, we aim to establish sufficient conditions for $\mathcal{J}$ and $(B;\epsilon,\mu,\sigma)$ such that both of them can be identified. Generically speaking, if the medium parameters are piecewise constants and the source current is invariant along any given fixed direction, then under generic conditions all of them can be recovered. To our best knowledge these identifiability results are new to the literature. 

Another practical scenario that is related to our proposed study is the thermoacoustic and photoacoustic tomography \cite{StU1, StU2, StU3, LiuUhl15}, where one intends to infer knowledge of an inside body by exterior acoustic wave measurement that is generated by an internal source. In particular, in a recent article \cite{LiuUhl15}, the simultaneous recovery of an internal source and the sound speed of the surrounding medium in thermoacoustic and photoacoustic tomography was considered, where the governing PDE is the scalar wave equation. For the proposed inverse problem \eqref{eq:ip} associated with the Maxwell system \eqref{eq:Maxwell2}, one would encounter more complicated analysis and technical difficulties in establishing the corresponding identifiability results than those in \cite{LiuUhl15}.

\subsection{Some remarks}

Before beginning with the mathematical study, several general remarks are in order. The major findings in this paper are contained in Theorems~\ref{th:unique1} and \ref{th:unique2}. Generically speaking, if the source term $\mathcal{J}(\Bx, t)=\mathbf{J}(\Bx)\delta (t)$ with $\mathbf{J}$ invariant along any given direction or a harmonic function, and the medium $(B; \epsilon, \mu, \sigma)$ possesses material parameters being piecewise-constant, then one can recover both the source and the medium. It is remarked that our simultaneous recovery results may not be exclusive, and the unique recovery might hold for other scenarios. The main mathematical arguments to establish the recovery result are first to reduce the Maxwell system \eqref{eq:Maxwell2} from the time regime to the frequency regime by Fourier transform and then to derive integral representations of the electric and magnetic fields, respectively. Next by performing asymptotic analysis in the low frequency regime, we can derive certain integral identities involving the source function and the material parameters, which are coupled together. By inverting those integral identities using harmonic analysis techniques, we obtain the desired unique recovery results.

In Section 2, we present some results concerning the forward Maxwell system \eqref{eq:Maxwell2}, especially the integral representations and the asymptotic expansions of the solutions with respect to the frequency $\omega$. Section 3 is devoted to the unique recovery results.

\vspace*{-.1cm}

\section{Auxiliary results on the forward Maxwell system}

In order to provide a functional analysis framework for the investigation of \eqref{eq:ip} associated with \eqref{eq:Maxwell2}, we first introduce some Sobolev spaces (cf. \cite{LRX,Ned}). We often use the spaces
\[
H_{loc}(\mbox{curl}; X)=\{ \BU|_D\in H(\mbox{curl}; D);\ D\ \ \mbox{is any bounded subdomain of $X$} \}
\]
and
\[
H(\mbox{curl}; D)=\{ \BU\in (L^2(D))^3;\ \nabla\times \BU\in (L^2(D))^3 \}.
\]
We also define the following spaces
\[
H_{loc}(\mbox{div}; X)=\{ \BU|_D\in H(\mbox{div}; D);\ D\ \ \mbox{is any bounded subdomain of $X$} \}
\]
and
\[
H(\mbox{div}; D)=\{ \BU\in (L^2(D))^3;\ \nabla\cdot \BU\in (L^2(D))^3 \}.
\]
Furthermore, for $\beta\in L^\infty(D)$, we denote by $H(\mbox{div}(\beta\cdot); D)$ the function space
\[
H(\mbox{div}(\beta\cdot); D)=\{ \BU|_D\in (L^2(D))^3; \quad \nabla\cdot (\beta \BU)\in (L^2(D))^3 \}
\]
In the sequel, we assume that
\begin{equation}\label{eq:th2}
\mathcal{J}(\Bx, t)=\mathbf{J}(\Bx) \delta (t),
\end{equation}
with $\bJ\in H_{0}(\mbox{div}; B)$, where $H_0(\mbox{div}; B)$ stands for the set of all functions that are in $H(\mbox{div}; B)$ and compactly supported in $B$. We define similarly for space $H_0(\mbox{curl}; B)$. In \eqref{eq:th2}, $\delta$ signifies the delta distribution. It is also assumed that $\epsilon, \mu$ and $\sigma$ belong to $L^\infty(\mathbb{R}^3)$. It is recalled that
 \begin{equation}\label{eq:outbd1}
\epsilon=\epsilon_0,  \quad \mu =\mu_0, \quad\bJ=0 \quad \mbox{and} \quad \Gs=0 \quad \mbox{in} \quad \RR^3\setminus\overline{B}.
\end{equation}
We refer to \cite{Leis1,Leis2} for the well-posedness of the forward Maxwell system \eqref{eq:pss}, and in particular the unique existence of a pair of solutions $(\mathcal{E}, \mathcal{H})\in C(\mathbb{R}_+^0, H_{loc}(\mathrm{curl},\mathbb{R}^3))^2$.

\subsection{The reduced Maxwell system}

We shall make use of the method of Fourier transform to reduce the Maxwell system \eqref{eq:Maxwell2} from the time-domain to the frequency-domain. To that end, we introduce the following temporal Fourier transform $\mathcal{F}_t: L^2(\mathbb{R}_+)^3\rightarrow L^2(\mathbb{R}_+)^3$,
\begin{equation}\label{eq:ft1}
\mathbf{J}(\omega)=\mathcal{F}_t(\mathcal{J}):=\frac{1}{2\pi}\int_0^\infty \mathcal{J}( t) e^{\mathrm{i}\omega t}\ dt, \quad\mathcal{J},\ \ \omega\in\mathbb{R}_+. 
\end{equation}
Throughout, for the Maxwell system \eqref{eq:Maxwell2}, we assume that the temporal Fourier transforms exist of $\mathcal{E}, \partial_t\mathcal{E}, \mathcal{H}$ and $\partial_t \mathcal{H}$. Indeed, we note that in \eqref{eq:Maxwell2}, if $\sigma$ is not identically vanishing, the Maxwell system retains a damping term, and hence the corresponding EM fields decays exponentially as time $t$ goes to infinity; see \cite{Leis2} for the relevant study. However, all of our subsequent results hold as long as the aforementioned Fourier transforms exist. Hence, in order to appeal for a more general study, we only require the existence of the corresponding Fourier transforms. Moreover, we always assume that the EM fields to \eqref{eq:Maxwell2} are outward radiating, and this can be fulfilled by requiring a certain causality condition on the EM configuration $(B; \mathcal{J}, \epsilon, \mu, \sigma)$ (cf. \cite{Leis2}). However, we shall not explore this point in the current article, and from a physical point of view, this is generically reasonable. 

Next, by applying the temporal Fourier transform to \eqref{eq:Maxwell2} and setting
\[
\mathbf{E}(\Bx; \omega)=\mathcal{F}_t(\mathcal{E}(\Bx, t)), \quad \mathbf{H}(\Bx; \omega)=\mathcal{F}_t(\mathcal{H}(\Bx, t)),
\]
we have the following reduced time-harmonic Maxwell system
 \begin{equation}\label{eq:pss}
\left \{
 \begin{array}{ll}
\nabla\times{\bE}-i\omega\mu{\bH}=0  &\mbox{in} \quad \RR^3,\\
\nabla\times{\bH}+i\omega(\epsilon+i\frac{\sigma}{\omega}) {\bE}=\bJ &\mbox{in} \quad \RR^3,
 \end{array}
 \right .
 \end{equation}
 subject to the Silver-M\"{u}ller radiation condition:
\beq\label{eq:radia2}
\lim_{\|\Bx\|\rightarrow\infty} \|\Bx\| \big( \bH \times\hat{\Bx}- \bE\big)=0,
\eeq
where $\hat{\Bx} = \Bx/\|\Bx\|$ for $\mathbf{x}\in\mathbb{R}^3\backslash\{0\}$. The Silver-M\"uller radiation condition characterizes the outgoing nature of the EM waves (cf. \cite{CK,Leis2,Ned}). In what follows, if $\mathbf{E}$ and $\mathbf{H}$ satisfy \eqref{eq:radia2}, they are referred to as radiating fields.

We refer to \cite{LRX} for the unique existence of solutions $\mathbf{E}, \mathbf{H}\in H_{loc}(\mbox{curl}; \mathbb{R}^3)$ to the Maxwell system \eqref{eq:pss}. It is readily seen that $\bE \in H_{loc}(\mbox{curl}; \mathbb{R}^3)\cap H(\mbox{div}(\epsilon\cdot); \RR^3)$ and $\bH \in H_{loc}(\mbox{curl}; \mathbb{R}^3)\cap H(\mbox{div}(\mu\cdot); \RR^3)$.

With the above preparations, the inverse problem \eqref{eq:ip} can be reformulated in the frequency-domain associated with the reduced Maxwell system \eqref{eq:pss} as
\begin{equation}\label{eq:ip2}
\Pi_{\mathbf{J},\epsilon,\mu,\sigma}(\mathbf{x}, \omega)\rightarrow (B; \mathbf{J}, \epsilon, \mu, \sigma).
\end{equation}
where
\begin{equation}\label{eq:bop2}
\Pi_{\mathbf{J},\epsilon,\mu,\sigma}(\mathbf{x}, \omega)=(\mathbf{E},\mathbf{H})(\mathbf{x}; \omega)|_{(\mathbf{x}, \omega)\in\partial B\times\mathbb{R}_+}.
\end{equation}
We would like to point out that as part of our results on  the inverse problem \eqref{eq:ip2}, if $\mu$ is known, then it is sufficient to  make use of the measurements of the electric field $\mathbf{E}$. Similarly, if $\epsilon$ is known, then it is enough to make  use of the measurements of the magnetic field $\mathbf{H}$; see our main recovery results in Theorem~\ref{th:unique1}. Furthermore, in all cases, we shall actually only make use of the frequency $\omega$ in a neighborhood of the zero frequency.

\subsection{Integral representations}\label{sect:3}

In this subsection, for the subsequent use, we present the integral representations of the electric field $\mathbf{E}$ and magnetic field $\mathbf{H}$ to the Maxwell system \eqref{eq:pss}. Our discussion follows the general treatment in \cite{Ned}.

Define $k_0:=\omega\sqrt{\epsilon_0\mu_0}$ to be the wave number.  Let $ \Gamma_{k_0}$ be the outgoing fundamental solution to the PDO $-(\Delta+k_0^2)$, that is given by
\begin{equation}\label{Gk} \ds \Gamma_{k_0}
(\Bx) = \frac{e^{ik_0 \|\Bx\|}}{4 \pi \|\Bx\|}.
 \end{equation}
For any bounded domain $B\subset \RR^3$, we denote by $\Vcal_{B}^{k_0}: L_{loc}^2(B)^3\rightarrow L_{loc}^2(B)^3$ the volume potential operator defined by
\beq\label{eq:volumept1}
\Vcal_{B}^{k_0}[\Phi](\Bx):=\int_{B}\Gamma_{k_0}(\Bx-\By)\Phi(\By)d\By.
\eeq
We also denote by $\Scal_B^{k_0}: H^{-1/2}(\p B)\rightarrow H^{1}(\RR^3\setminus\p B)$ the single layer potential operator given by
\beq\label{eq:layperpt1}
\Scal_{B}^{k_0}[\phi](\Bx):=\int_{\p B}\Gamma_{k_0}(\Bx-\By)\phi(\By)d s_\By, \quad \Bx\in \RR^3\setminus\p B,
\eeq
and the double layer potential $\mathcal{D}_{B}^{k_0}:H^{1/2}(\p B)\rightarrow H^{1}(\RR^3\setminus\p B)$ given by
$$
\mathcal{D}_{B}^{0}[\varphi](\Bx):=\int_{\p B}\frac{\p}{\p \nu_y}\Gamma_{0}(\Bx-\By)\varphi(\By)d s_\By, \quad \Bx\in \RR^3\setminus\p B,
$$
and $\Kcal_B^{k_0}: H^{1/2}(\p B)\rightarrow H^{1/2}(\p B)$ the Neumann-Poincar\'e operator
\beq\label{eq:layperpt2}
\Kcal_{B}^{k_0}[\phi](\Bx):=\mbox{p.v.}\quad\int_{\p B}\frac{\p\Gamma_{k_0}(\Bx-\By)}{\p \nu_y}\phi(\By)d s_\By,
\eeq
where p.v. stands for the Cauchy principle value.
In \eqref{eq:layperpt2} and also in what follows, $\nu$ signifies the exterior unit normal vector to the boundary of the domain concerned.
It is known that the single layer potential operator $\Scal_B^{k_0}$ and the double layer potential operator $\mathcal{D}_B^{k_0}$ satisfy the trace formulae
\beq \label{eq:trace}
\begin{split}
\frac{\p}{\p\nu}\Scal_B^{k_0}[\phi] \Big|_{\pm} = (\mp \frac{1}{2}I+
(\Kcal_{B}^{k_0})^*)[\phi] &\quad \mbox{on } \p B, \\
\mathcal{D}_{B}^{0}[\varphi](\Bx)\Big|_{\pm}=(\pm \frac{1}{2}I+
\Kcal_{B}^{k_0})[\varphi] &\quad \mbox{on } \p B,
\end{split}
\eeq
where $(\Kcal_{B}^{k_0})^*$ is the adjoint operator of $\Kcal_B^{k_0}$ with respect to the $L^2$ inner product. 

Define a $6\times 6$ matrix operator $\bG$ as follows,
\beq\label{eq:bG}
\bG(\Bx)= \left(
\begin{array}{ll}
(k_0^2I_{3}+\nabla^2)\Gamma_{k_0}(\Bx) & i\omega\mu_0\nabla\times (\Gamma_{k_0}(\Bx)I_3)\\
-i\omega\epsilon_0\nabla\times(\Gamma_{k_0}(\Bx)I_3) & (k_0^2I_{3}+\nabla^2)\Gamma_{k_0}(\Bx)
\end{array}
\right),
\eeq
where $I_n$ is the $n\times n$ identity matrix. Then the solution to \eqnref{eq:pss} and \eqnref{eq:radia2} can be represented by
\beq\label{eq:repre1}
\left(
\begin{array}{l}
\bE\\
\bH
\end{array}
\right)
=\int_{\RR^3}\bG(\cdot-\By)
\left(
\begin{array}{c}
\tilde{\gamma}(\By)\bE(\By)+i/(\omega\epsilon_0) \bJ(\By)\\
\tilde{\mu}(\By)\bH(\By)
\end{array}
\right)
d\By,
\eeq
where and also in what follows,
\begin{equation}\label{eq:defnew}
\mbox{$\tilde{\gamma}:=(\epsilon+i\Gs/\omega-\epsilon_0)/\epsilon_0$\ \ \ and\ \ \ $\tilde\mu:=(\mu-\mu_0)/\mu_0$}.
\end{equation}
It can be readily seen from \eqnref{eq:repre1} that $\bE(\Bx)= \Ocal(\|\Bx\|^{-1})$, $\bH(\Bx)= \Ocal(\|\Bx\|^{-1})$ as $\|\Bx\|\rightarrow +\infty$.

One can rewrite \eqnref{eq:repre1} as
\begin{equation}\label{eq:ll}
\begin{split}
\bE = & (k_0^2I_{3}+D^2)\int_{\RR^3}\Gamma_{k_0}(\Bx-\By)(\tilde{\gamma}\bE+i/(\omega\epsilon_0) \bJ)(\By)d\By\\
&+i\omega\mu_0\nabla\times \int_{\RR^3}\Gamma_{k_0}(\Bx-\By)\tilde{\mu}(\By)\bH(\By)d\By,\\
\bH = & -i\omega\epsilon_0\nabla\times\int_{\RR^3}\Gamma_{k_0}(\Bx-\By)(\tilde{\gamma}\bE+i/(\omega\epsilon_0) \bJ)(\By)d\By \\
&+(k_0^2I_{3}+D^2)\int_{\RR^3}\Gamma_{k_0}(\Bx-\By)\tilde{\mu}(\By)\bH(\By)d\By.
\end{split}
\end{equation}
By virtue of \eqref{eq:ll}, along with straightforward calculations, one can further show that
\beq\label{eq:repre2}
\left(
\begin{array}{c}
\omega\bE\\
\bH
\end{array}
\right)
=\Mcal_{B}^{k_0}
\left(
\begin{array}{c}
\tilde\gamma\omega\bE\\
\tilde\mu\bH
\end{array}
\right)
+
\left(
\begin{array}{c}
i/\epsilon_0(k_0^2I_{3}+D^2)\Vcal_{B}^{k_0}[\bJ]\\
\nabla\times\Vcal_{B}^{k_0}[\bJ]
\end{array}
\right) \quad \mbox{in} \quad \RR^3,
\eeq
where the operator $\Mcal_{B}^{k_0}$ is defined by
\beq\label{eq:opM}
\Mcal_{B}^{k_0}:=
\left(
\begin{array}{cc}
(k_0^2I_{3}+D^2)\Vcal_{B}^{k_0} & i\omega^2\mu_0\nabla\times\Vcal_{B}^{k_0}\\
-i\epsilon_0\nabla\times\Vcal_{B}^{k_0} & (k_0^2I_{3}+D^2)\Vcal_{B}^{k_0}
\end{array}
\right).
\eeq
We mention that the notation $D^2\Vcal_B^{k_0}$ appearing in \eqnref{eq:ll}, \eqnref{eq:repre2} and \eqnref{eq:opM} stands for $\nabla(\nabla\cdot\Vcal_B^{k_0})$, and we shall also make use of such a notation in the subsequent analysis.

\subsection{Asymptotic expansions}

By using the integral representation \eqref{eq:repre2}, we next derive the asymptotic expansions of $\mathbf{E}$ and $\mathbf{H}$ as $\omega\rightarrow +0$. To that end, we first derive an important lemma. In the following, if $k_0=0$, we formally set $\Gamma_{k_0}$ introduced in \eqref{Gk} to be $\Gamma_0=1/(4\pi\|\Bx\|)$, and the other integral operators introduced in the previous subsection can also be formally defined when $k_0=0$.

\begin{lem}\label{le:opeig01}
The operator $D^2\Vcal_B^{0}$ is semi-negative definite which maps $\rm{GH}_{0}(\mathrm{div}; B)$ to $H^{1}(B)^3$, where $\rm{GH}_{0}(\mathrm{div}; B)$ is defined by
\beq\label{eq:lerevisegr01}
\rm{GH}_{0}(\mathrm{div}; B):=\left\{\Phi\in H_0(\rm{div}; B); \nabla\times\Phi=0\right\}.
\eeq
Furthermore,
the only possible eigenvalue of $D^2\Vcal_B^{0}$ in $H_{0}(\mathrm{div}; B)$ is $-1$ and the corresponding eigenfunction $\Phi$ is in $H^1(B)^3$ verifying
\beq\label{eq:opeig01}
D^2\Vcal_B^0[\Phi]=-\Phi, \quad \nabla\cdot\Phi\neq 0 \quad \mbox{and} \quad \nabla\times \Phi=0.
\eeq
If $\Phi\in H_{0}(\mathrm{div}; B)$ and $\nabla\cdot\Phi=0$ then
\beq\label{eq:opeig02}
D^2\Vcal_B^0[\Phi]=0.
\eeq
\end{lem}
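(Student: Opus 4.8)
The plan is to reduce the vector operator $D^2\Vcal_B^0=\nabla(\nabla\cdot\,\Vcal_B^0)$ to a scalar Newtonian potential acting on $\nabla\cdot\Phi$. Since every $\Phi\in H_0(\mathrm{div};B)$ is compactly supported in $B$, integrating by parts against the kernel $\Gamma_0(\Bx-\By)$ produces no boundary contribution, and using $\nabla_\Bx\Gamma_0(\Bx-\By)=-\nabla_\By\Gamma_0(\Bx-\By)$ I would first establish the commutation identity $\nabla\cdot\Vcal_B^0[\Phi]=\Vcal_B^0[\nabla\cdot\Phi]$. Setting $w:=\Vcal_B^0[\nabla\cdot\Phi]$ and $g:=\nabla\cdot\Phi\in L^2(B)$, this yields $D^2\Vcal_B^0[\Phi]=\nabla w$, so the operator depends on $\Phi$ only through the scalar $g$, and $-\Delta w=g$ in $\RR^3$ with $g$ extended by zero. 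Because $g\in L^2(B)$, the standard Calder\'on--Zygmund estimate for the Newtonian potential gives $w\in H^2(B)$, whence $\nabla w\in H^1(B)^3$; this is the claimed mapping into $H^1(B)^3$. It also settles the divergence-free case at once: if $\nabla\cdot\Phi=0$ then $g=0$, so $w\equiv0$ and $D^2\Vcal_B^0[\Phi]=\nabla w=0$, which is \eqref{eq:opeig02}.

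For semi-negative definiteness I would test the operator against $\Phi$. Using $D^2\Vcal_B^0[\Phi]=\nabla w$ and integrating by parts in $B$, the boundary term $\int_{\partial B}w\,(\Phi\cdot\nu)\,ds$ vanishes by the compact support of $\Phi$, so $\langle D^2\Vcal_B^0[\Phi],\Phi\rangle_{L^2(B)}=-\int_B w\,g\,d\Bx$. Since $g$ is supported in $B$ and $-\Delta w=g$ in all of $\RR^3$, a second integration by parts over $\RR^3$—legitimate because $w=\Ocal(\|\Bx\|^{-1})$ and $\nabla w=\Ocal(\|\Bx\|^{-2})$ force the large-sphere boundary terms to $0$—gives $\int_{\RR^3}w\,g=\int_{\RR^3}|\nabla w|^2$. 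Hence $\langle D^2\Vcal_B^0[\Phi],\Phi\rangle_{L^2(B)}=-\int_{\RR^3}|\nabla w|^2\le0$, with equality precisely when $\nabla w\equiv0$, i.e. on the divergence-free part. (On the subspace $\mathrm{GH}_0(\mathrm{div};B)$ of \eqref{eq:lerevisegr01} one even has the sharper identity $D^2\Vcal_B^0[\Phi]=-\Phi$, via the vector identity $\nabla(\nabla\cdot\Bu)=\Delta\Bu+\nabla\times(\nabla\times\Bu)$ applied to $\Bu=\Vcal_B^0[\Phi]$ together with $\nabla\times\Bu=\Vcal_B^0[\nabla\times\Phi]=0$.)

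To determine the eigenvalues, suppose $D^2\Vcal_B^0[\Phi]=\lambda\Phi$ in $B$ with $\Phi\in H_0(\mathrm{div};B)$, $\Phi\neq0$ and $\lambda\neq0$. The reduction turns this into $\nabla w=\lambda\Phi$ in $B$, so $\Phi=\lambda^{-1}\nabla w$ is a gradient; therefore $\nabla\times\Phi=0$, and $\Phi\in H^1(B)^3$ by the regularity of $w$. Taking the divergence and using $\Delta w=-\nabla\cdot\Phi$ in $B$ yields $\nabla\cdot\Phi=\lambda^{-1}\Delta w=-\lambda^{-1}\nabla\cdot\Phi$, i.e. $(1+\lambda^{-1})\,\nabla\cdot\Phi=0$ in $B$. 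If $\nabla\cdot\Phi\equiv0$ then $w\equiv0$ and $\Phi=\lambda^{-1}\nabla w=0$, contradicting $\Phi\neq0$; hence $\nabla\cdot\Phi\not\equiv0$ and $\lambda=-1$. This establishes \eqref{eq:opeig01} and shows $-1$ is the only possible nonzero eigenvalue.

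I expect the main obstacle to be the rigorous justification of the two integration-by-parts steps rather than the (short) algebra of the eigenvalue relation: one must handle the distributional Laplacian of $w$ across $\partial B$, control the decay of $w$ and $\nabla w$ at infinity to discard the boundary-at-infinity terms, and secure enough regularity ($w\in H^2(B)$, hence $\nabla w\in H^1(B)^3$) to legitimately restrict the globally defined potential back to $B$ and to read off both $\nabla\times\Phi=0$ and $\Phi\in H^1(B)^3$ for an eigenfunction. Once the reduction $D^2\Vcal_B^0[\Phi]=\nabla\Vcal_B^0[\nabla\cdot\Phi]$ and the energy identity are in place, all four assertions follow directly.
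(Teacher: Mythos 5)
Your proposal is correct, and its skeleton coincides with the paper's: the commutation identity $D^2\Vcal_B^0[\Phi]=\nabla\Vcal_B^0[\nabla\cdot\Phi]$ obtained by integrating by parts against the kernel (no boundary term since $\Phi$ is compactly supported), the $H^1(B)^3$ regularity from $\Vcal_B^0:L^2(B)\to H^2(B)$, the immediate treatment of the divergence-free case, and the eigenvalue relation $\lambda=-1$ by taking curl and divergence of $D^2\Vcal_B^0[\Phi]=\lambda\Phi$. The one step you handle by a genuinely different computation is the semi-negative definiteness: the paper writes $D^2\Vcal_B^0[\Phi]=\nabla\times\nabla\times\Vcal_B^0[\Phi]-\Phi$ and moves one curl by parts, so that on the curl-free subspace $\mathrm{GH}_0(\mathrm{div};B)$ the quadratic form is exactly $-\int_B\|\Phi\|^2$ (this is the identity you relegate to your parenthesis), whereas you pass to the scalar potential $w=\Vcal_B^0[\nabla\cdot\Phi]$ and use $-\Delta w=\nabla\cdot\Phi$ in $\RR^3$ together with decay at infinity to get $\langle D^2\Vcal_B^0[\Phi],\Phi\rangle=-\int_{\RR^3}|\nabla w|^2\le 0$. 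Your version is valid on all of $H_0(\mathrm{div};B)$, not only on the curl-free subspace, so it is marginally more general; the paper's version buys the sharper exact value $-\|\Phi\|_{L^2(B)}^2$ on $\mathrm{GH}_0(\mathrm{div};B)$ (the two agree there, since a compactly supported curl-free field equals $-\nabla w$ and $w$ then vanishes off the support of $\Phi$). Both computations rest on the same integration-by-parts justifications you flag, and none of them presents a genuine obstacle.
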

\begin{proof}
Let $\Phi\in H_0(\mathrm{div}; B)$. Using the identity $\p \Gamma_{k_0}(\Bx-\By)/\p \Bx_i = -\p \Gamma_{k_0}(\Bx-\By)/\p \By_i$ and integration by parts, one has
\beq\label{eq:ibp01}
\begin{split}
& D^2\Vcal_B^0[\Phi](\Bx)=\nabla\nabla\cdot\int_B \Gamma_0(\Bx-\By)\Phi(\By)d\By\\
=& \nabla\int_B \Gamma_0(\Bx-\By)(\nabla\cdot \Phi)(\By)d\By=\nabla \Vcal_B^0[\nabla\cdot \Phi].
\end{split}
\eeq
By the fact that $\Vcal_B^0$ maps $L^2(B)$ to $H^2(B)$ (cf. \cite{CK}), one easily sees that $D^2\Vcal_B^0[\Phi]\in H^1(B)^3$.
If $\Phi\in H_{0}(\mbox{div}; B)$ and $\nabla\cdot\Phi=0$ then by using \eqnref{eq:ibp01}, one can prove \eqnref{eq:opeig02}.

Next for $\Phi\in \rm{GH}_{0}(\mathrm{div}; B)$ and $\nabla\cdot\Phi\neq 0$, by integration by parts, one can show
\begin{align*}
\la D^2\Vcal_B^0[\Phi], \Phi\ra&=\int_B D^2\Vcal_B^0[\Phi]\cdot \Phi=\int_B (\nabla\times\nabla\times\Vcal_B^0[\Phi]-\Phi)\cdot \Phi\\
&=\int_B (\nabla\times\Vcal_B^0[\Phi])\cdot (\nabla\times\Phi)-\int_B \Phi\cdot \Phi\\
&=-\int_B \|\Phi\|^2\leq 0,
\end{align*}
which shows that $D^2\Vcal_B^0$ is semi-negative definite on $\rm{GH}_{0}(\mathrm{div}; B)$.

Finally, suppose that $\Gl\neq 0$ is a possible eigenvalue of $D^2\Vcal_B^0$ in $H_0(\mbox{div}; B)$. We have
$$
D^2\Vcal_B^0[\Phi]=\Gl \Phi, \quad \Phi\in H_0(\mbox{div}; B),
$$
which implies that $\Phi$ is also in $H_0(\mbox{curl}; B)$. By taking respectively curl and divergence of both sides of the above equation, one obtains
$$
0=\Gl \nabla\times\Phi, \quad -\nabla\cdot \Phi=\Gl \nabla \cdot \Phi,
$$
which proves \eqnref{eq:opeig01}.

The proof is complete.
\end{proof}

We proceed with the asymptotic analysis as $\omega\rightarrow+0$.

\begin{prop}\label{prop:01}
From \eqnref{eq:repre2}, one can show that
\beq\label{eq:opeqB01}
\Acal_{B}^{k_0}\left(
\begin{array}{c}
\omega\bE\\
\bH
\end{array}
\right)
=
\left(
\begin{array}{c}
i/\epsilon_0(k_0^2I_{3}+D^2)\Vcal_{B}^{k_0}[\bJ]\\
\nabla\times\Vcal_{B}^{k_0}[\bJ]
\end{array}
\right) \quad \mbox{in} \quad \RR^3,
\eeq
where $\Acal_B^{k_0}$ is defined by
\beq\label{eq:opA}
\Acal_{B}^{k_0}:= I_6 -
\Mcal_{B}^{k_0}\left(
\begin{array}{cc}
\tilde{\gamma} & 0\\
0 & \tilde\mu
\end{array}
\right).
\eeq
For $\omega\in\mathbb{R}_+$ sufficiently small, we have the following asymptotic expansions of $\Acal_B^{k_0}$ with respect to $\omega$,
\beq\label{eq:expanM01}
\Acal_B^{k_0}=I_6- \Mcal_B^0\left(
\begin{array}{cc}
\tilde{\gamma} & 0\\
0 & \tilde\mu
\end{array}
\right) + \omega^2\mathcal{R}_B^{k_0},
\eeq
where $\Mcal_B^0$ is defined by
\beq\label{eqopM0}
\Mcal_B^{0}:=\left(
\begin{array}{cc}
D^2\Vcal_{B}^{0} & 0\\
-i\epsilon_0\nabla\times\Vcal_{B}^{0} & D^2\Vcal_{B}^{0}
\end{array}
\right),
\eeq
and $\mathcal{R}_B^{k_0}$ is a remainder term which is a bounded operator on
$H^2_{loc}(\RR^3)^3\times H^2_{loc}(\RR^3)^3$.
\end{prop}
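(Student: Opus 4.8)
The plan is to treat the two assertions separately: first the operator identity \eqref{eq:opeqB01}, which is purely algebraic, and then the low-frequency expansion \eqref{eq:expanM01}, which is the substantive part.

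For \eqref{eq:opeqB01} I would start from the representation \eqref{eq:repre2} and exploit the fact that the coefficient matrix is diagonal. Denoting by $\mathbf{F}_{k_0}$ the source vector on the right-hand side of \eqref{eq:repre2}, the term $\Mcal_B^{k_0}(\tilde\gamma\omega\bE,\tilde\mu\bH)^{\mathrm T}$ factors as $\Mcal_B^{k_0}\,\mathrm{diag}(\tilde\gamma,\tilde\mu)(\omega\bE,\bH)^{\mathrm T}$, because $\tilde\gamma$ and $\tilde\mu$ act as componentwise multiplication. Moving this term to the left-hand side and collecting the factor $(\omega\bE,\bH)^{\mathrm T}$ gives $\bigl(I_6-\Mcal_B^{k_0}\mathrm{diag}(\tilde\gamma,\tilde\mu)\bigr)(\omega\bE,\bH)^{\mathrm T}=\mathbf{F}_{k_0}$, which is exactly \eqref{eq:opeqB01} with $\Acal_B^{k_0}$ as defined in \eqref{eq:opA}. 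This step is immediate.

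The core of the proof is \eqref{eq:expanM01}. Since $\Acal_B^{k_0}=I_6-\Mcal_B^{k_0}\mathrm{diag}(\tilde\gamma,\tilde\mu)$, it suffices to expand the operator $\Mcal_B^{k_0}$ itself in powers of $\omega$ and then apply the multiplier $\mathrm{diag}(\tilde\gamma,\tilde\mu)$. I would Taylor-expand the kernel in $k_0=\omega\sqrt{\epsilon_0\mu_0}$,
\[
\Gamma_{k_0}(\Bx)=\frac{e^{\mathrm i k_0\|\Bx\|}}{4\pi\|\Bx\|}=\Gamma_0(\Bx)+\frac{\mathrm i k_0}{4\pi}-\frac{k_0^2\|\Bx\|}{8\pi}+O(k_0^3).
\]
The key observation is that the first-order term $\mathrm i k_0/(4\pi)$ is \emph{independent of} $\Bx$, so inserting this expansion into $\Vcal_B^{k_0}$ shows that the $O(k_0)$ contribution to $\Vcal_B^{k_0}[\Phi]$ is the spatially constant vector $(\mathrm i k_0/4\pi)\int_B\Phi$. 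Consequently it is annihilated by both $D^2=\nabla(\nabla\cdot)$ and $\nabla\times$, which are precisely the operators appearing in the entries $(k_0^2 I_3+D^2)\Vcal_B^{k_0}$ and $\nabla\times\Vcal_B^{k_0}$ of $\Mcal_B^{k_0}$. Hence these entries carry no $O(\omega)$ term, their leading parts being $D^2\Vcal_B^0$ and $\nabla\times\Vcal_B^0$, with the next correction of order $k_0^2=O(\omega^2)$. The remaining pieces, namely $k_0^2 I_3\Vcal_B^{k_0}$ on the diagonal and $\mathrm i\omega^2\mu_0\nabla\times\Vcal_B^{k_0}$ in the upper-right block, already carry an explicit factor $\omega^2$. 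Collecting terms gives $\Mcal_B^{k_0}=\Mcal_B^0+\omega^2\widetilde{\mathcal R}_B^{k_0}$ with $\Mcal_B^0$ as in \eqref{eqopM0}, and \eqref{eq:expanM01} follows with $\mathcal R_B^{k_0}=-\widetilde{\mathcal R}_B^{k_0}\mathrm{diag}(\tilde\gamma,\tilde\mu)$.

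Finally I would verify the boundedness claim. The remainder kernels are the tails of the Taylor expansion of $e^{\mathrm i k_0\|\Bx\|}$; these are strictly less singular than $\Gamma_0$ (behaving like $\|\Bx\|,\|\Bx\|^2,\ldots$ near the origin), so after applying $D^2$ or $\nabla\times$ the resulting integral operators have at worst weakly singular kernels and map $L^2$ boundedly, while the property $\Vcal_B^{k_0}\colon L^2(B)\to H^2(B)$ (cf.\ \cite{CK}, as used in Lemma~\ref{le:opeig01}) controls the $k_0^2\Vcal_B^{k_0}$ pieces; together these give boundedness of $\widetilde{\mathcal R}_B^{k_0}$ on $H^2_{loc}(\RR^3)^3\times H^2_{loc}(\RR^3)^3$. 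I expect the main obstacle to be twofold. First, one must keep the estimates uniform in $\omega$ while differentiating the singular kernel twice: the $D^2\Vcal_B^{k_0}$ entry is of Calder\'on--Zygmund type, so the cancellation of the $O(\omega)$ term and the $\omega^2$ control of the remainder should be argued through the integral form of the Taylor remainder rather than termwise. Second, one must handle the singular part $\mathrm i\sigma/(\omega\epsilon_0)$ of $\tilde\gamma$ from \eqref{eq:defnew}: since this blows up as $\omega\to+0$, the boundedness of $\mathcal R_B^{k_0}$ is to be read for each fixed small $\omega$, the genuinely $O(\omega^2)$-small object being $\omega^2\widetilde{\mathcal R}_B^{k_0}$ before the singular multiplier is applied.
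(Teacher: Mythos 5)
Your proposal is correct and follows exactly the route the paper intends: the paper's own proof is just the remark that the result follows by ``straightforward calculations'' from \eqref{eq:repre2} and \eqref{eq:opM}, and your computation supplies precisely those details, with the key cancellation (the $O(k_0)$ term $\mathrm{i}k_0/(4\pi)$ of $\Gamma_{k_0}$ being spatially constant and hence annihilated by $D^2$ and $\nabla\times$) matching what the paper uses implicitly here and explicitly in its later higher-order expansion \eqref{eq:expanA}. Your closing caveat about the $\mathrm{i}\sigma/(\omega\epsilon_0)$ part of $\tilde\gamma$ is a fair reading of how the remainder statement must be interpreted and does not affect the correctness of the argument.
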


\begin{proof}
The proposition can be proved by straightforward calculations using \eqref{eq:repre2} and \eqnref{eq:opM}.
\end{proof}

In what follows, we introduce the following Sobolev space for $s\in\mathbb{R}$ and $|s|\leq 1/2$,
\begin{equation}\label{eq:zeros}
H_0^s(\partial B)=\{u\in H^s(\partial B); \ \int_{\partial B} u\ ds=0\}.
\end{equation}
Similarly we define $L_0^2(\p B)$ to be the space of functions in $L^2$ and has zero average on the boundary.
For $\varepsilon\in L^\infty(B)$ and $\varepsilon>\alpha_0\in\mathbb{R}_+$, we define
$$\GL_{\varepsilon}: H_0^{-1/2}(\p B)\rightarrow H_0^{1/2}(\p B)$$
 to be the Neumann-to-Dirichlet map such that $\GL_{\varepsilon}[\varepsilon\p u/{\p\nu}|_{\p B}]=u|_{\p B}$, where $u\in H^1(B)$ is the solution to
 \[
\nabla\cdot\varepsilon\nabla u=0\quad \mbox{in}\ \ B\ \ \mbox{and}\ \ \int_{\p B}u\ ds=0.
\]
It is remarked that $\Lambda_\varepsilon$ is invertible. Define $N_{\varepsilon}(\Bx,\By)$ to be the Neumann function that satisfies
\beq \label{revn}
 \ \left \{
\begin{array}{l}
\ds \nabla \cdot \varepsilon \nabla N_\varepsilon(\cdot, \By) = -\delta_{\By}(\cdot) \quad
\mbox{in } B , \\
\nm \ds \nu\cdot\varepsilon \nabla N_\varepsilon(\cdot, \By)
\big|_{\p B} = -\frac{1}{|\partial B |} , \quad \ds \int_{\p B}
N_\varepsilon(\Bx,\By) \, d s_\Bx =0 \quad \mbox{for} \quad \By\in B.
\end{array}
\right .
\eeq
We shall also need the following assumption on the permittivity $\epsilon$ and conductivity $\Gs$.
\begin{asm}\label{asm:01}
Assume that ($\epsilon$, $\Gs$) satisfies one of the following two conditions:
\begin{itemize}
  \item[(i)] $\Gs=c\epsilon$ in $B$, where $c\geq 0$ is a constant;
  \item[(ii)] $\epsilon$ and $\Gs$ are piecewise constants in $\mathbb{R}^3$ in the following sense. Set $\Sigma_0:=B$ and assume that $\Sigma_j$, $j=1, 2, \ldots, N$, are Lipschitz subdomains of $\Sigma_0$ such that $\Sigma_{j}\Subset\Sigma_{j-1}$ and $\Sigma_{j-1}\backslash \overline{\Sigma_{j}}$ is connected for  $1\leq j\leq N$. Set $\epsilon^{(0)}=\epsilon_0$, $\sigma^{(0)}=0$ and let $\epsilon^{(j)}$ and $\sigma^{(j)}$ be constants for $j=1,2,\ldots, N+1$. The medium parameters are given as follows,
\beq\label{eq:multilayered}
\begin{split}
&\epsilon=\epsilon^{(0)}\chi(\mathbb{R}^3\backslash \overline{\Sigma_0})+\sum_{j=1}^N \epsilon^{(j)}\chi(\Sigma_{j-1}\backslash\overline{\Sigma_j})+\epsilon^{(N+1)}\chi(\Sigma_{N}),\\
&\sigma=\sigma^{(0)}\chi(\mathbb{R}^3\backslash \overline{\Sigma_0})+\sum_{j=1}^N \sigma^{(j)}\chi(\Sigma_{j-1}\backslash\overline{\Sigma_j})+\sigma^{(N+1)}\chi(\Sigma_{N}).
\end{split}
\eeq
In \eqref{eq:multilayered} and also in what follows, $\chi$ denotes the characteristic function.
\end{itemize}
\end{asm}

With the above preparations, we next show the following important lemma.
\begin{lem}\label{le:01}
Let $\epsilon$, $\mu$ and $\sigma$ be those described in Section~\ref{sect:1.1}. Suppose further that $\Gs$ and $\epsilon$ satisfy Assumption \ref{asm:01}.
Then there exists $\omega_0\in\mathbb{R}_+$ such that for any $\omega\leq \omega_0$, \eqnref{eq:repre2} is uniquely solvable for $(\bE, \bH)\in H_{loc}(\mathrm{curl}; \RR^3)\times H_{loc}(\mathrm{curl}; \RR^3)$ and radiating at infinity. More specifically,
\beq\label{eq:opeqB02}
\left(
\begin{array}{c}
\omega\bE\\
\bH
\end{array}
\right)
=
(\Acal_{B}^{0})^{-1}\left(
\begin{array}{c}
i/\epsilon_0D^2\Vcal_{B}^{0}[\bJ]\\
\nabla\times\Vcal_{B}^{0}[\bJ]
\end{array}\right)
+\Ocal(\omega^2)
\quad \mbox{in} \quad B,
\eeq
where $(\Acal_{B}^{0})^{-1}$ is given by
\beq\label{eq:invA}
\begin{split}
&(\Acal_{B}^{0})^{-1}=\\
&\left(
\begin{array}{cc}
(I_3- D^2\Vcal_{B}^{0}\tilde\gamma)^{-1} & 0\\
-i\epsilon_0(I_3- D^2\Vcal_B^0\tilde\mu)^{-1}\nabla\times\Vcal_{B}^{0}\tilde\gamma(I_3- D^2\Vcal_B^0\tilde\gamma)^{-1} & (I_3- D^2 \Vcal_{B}^{0}\tilde\mu)^{-1}
\end{array}
\right).
\end{split}
\eeq
\end{lem}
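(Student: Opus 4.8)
The plan is to invert the operator equation that Proposition~\ref{prop:01} supplies, namely $\Acal_{B}^{k_0}$ applied to $(\omega\bE,\bH)$ equals the right-hand side of \eqref{eq:opeqB01}, by exploiting the asymptotic splitting $\Acal_B^{k_0}=\Acal_B^{0}+\omega^2\mathcal{R}_B^{k_0}$ from \eqref{eq:expanM01}. First I would show that the leading operator $\Acal_B^{0}=I_6-\Mcal_B^{0}\,\mathrm{diag}(\tilde\gamma,\tilde\mu)$ is boundedly invertible, and then treat $\omega^2\mathcal{R}_B^{k_0}$ as a small perturbation: writing $\Acal_B^{k_0}=\Acal_B^{0}\big(I_6+\omega^2(\Acal_B^{0})^{-1}\mathcal{R}_B^{k_0}\big)$, a Neumann series converges once $\omega^2\|(\Acal_B^{0})^{-1}\mathcal{R}_B^{k_0}\|<1$, which both fixes the threshold $\omega_0$ and gives $(\Acal_B^{k_0})^{-1}=(\Acal_B^{0})^{-1}+\Ocal(\omega^2)$. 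In parallel I would expand the right-hand side: since $k_0^2=\omega^2\epsilon_0\mu_0$ and $\Gamma_{k_0}=\Gamma_0+\Ocal(\omega)$ with the $\Ocal(\omega)$ term spatially constant (hence annihilated by both $D^2$ and $\nabla\times$), one obtains $(k_0^2I_3+D^2)\Vcal_B^{k_0}[\bJ]=D^2\Vcal_B^{0}[\bJ]+\Ocal(\omega^2)$ and $\nabla\times\Vcal_B^{k_0}[\bJ]=\nabla\times\Vcal_B^{0}[\bJ]+\Ocal(\omega^2)$. Combining the two expansions yields \eqref{eq:opeqB02}, after which the values of $(\omega\bE,\bH)$ outside $B$, and in particular their radiating character, are recovered from the representation \eqref{eq:repre2}.

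The invertibility of $\Acal_B^{0}$ and the explicit formula \eqref{eq:invA} I would extract from the block lower-triangular structure of $\Mcal_B^{0}$ in \eqref{eqopM0}. Indeed
\[
\Acal_B^{0}=
\begin{pmatrix}
I_3-D^2\Vcal_B^0\tilde\gamma & 0\\
i\epsilon_0\nabla\times\Vcal_B^0\tilde\gamma & I_3-D^2\Vcal_B^0\tilde\mu
\end{pmatrix},
\]
and inverting a lower-triangular block matrix produces exactly \eqref{eq:invA}, provided the two diagonal blocks $I_3-D^2\Vcal_B^0\tilde\gamma$ and $I_3-D^2\Vcal_B^0\tilde\mu$ are invertible. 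Because $\Vcal_B^0$ maps $L^2(B)^3$ into $H^2(B)^3$ (as used in the proof of Lemma~\ref{le:opeig01}), the operator $D^2\Vcal_B^0$ lands in $H^1(B)^3$ and is therefore compact on $L^2(B)^3$; hence each diagonal block is a compact perturbation of the identity and the Fredholm alternative reduces invertibility to injectivity.

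For injectivity I would reduce each diagonal block to a scalar transmission problem. If $(I_3-D^2\Vcal_B^0\tilde\gamma)[\Phi]=0$, then since the range of $D^2\Vcal_B^0=\nabla\Vcal_B^0(\nabla\cdot)$ consists of gradient fields by \eqref{eq:ibp01}, the null element $\Phi$ is curl-free; taking the divergence and using $\nabla\cdot D^2\Vcal_B^0[\Xi]=-\nabla\cdot\Xi$ in $B$ gives $\nabla\cdot\big((1+\tilde\gamma)\Phi\big)=0$ in $B$ with $1+\tilde\gamma=(\epsilon+i\Gs/\omega)/\epsilon_0$. Writing $\Phi=\nabla p$ with $p=\Vcal_B^0[\nabla\cdot(\tilde\gamma\Phi)]$ harmonic and decaying outside $B$, one recognizes the transmission problem for the complex conductivity $\epsilon+i\Gs/\omega$ inside $B$ and $\epsilon_0$ outside; testing against $\overline p$ and taking real parts forces $\nabla p=0$ because $\mathrm{Re}(\epsilon+i\Gs/\omega)=\epsilon>0$, whence $\Phi=0$. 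The same computation with $1+\tilde\mu=\mu/\mu_0>0$ disposes of the block $I_3-D^2\Vcal_B^0\tilde\mu$. Here the curl-free structure of the range confines any null element to the curl-free sector, making the scalar reduction exhaustive, while Lemma~\ref{le:opeig01} supplies the complementary spectral picture ($D^2\Vcal_B^0$ is semi-negative definite with $-1$ its only possible eigenvalue), consistent with the absence of the $+1$ eigenvalue that injectivity rules out; Assumption~\ref{asm:01} is the device that renders the associated Neumann-to-Dirichlet map $\GL_\varepsilon$ and Neumann function $N_\varepsilon$ explicit for the complex coefficient $\varepsilon=\epsilon+i\Gs/\omega$.

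The main obstacle I anticipate is not invertibility at a fixed positive frequency, which the energy argument above already delivers, but the uniformity as $\omega\rightarrow+0$. The multiplier $\tilde\gamma$ contains the singular term $i\Gs/(\omega\epsilon_0)$, so $D^2\Vcal_B^0\tilde\gamma$ and hence $(\Acal_B^0)^{-1}$ depend on $\omega$ and could a priori grow like $\omega^{-1}$; the Neumann-series perturbation then demands a quantitative bound on $\|(\Acal_B^0)^{-1}\|$ that survives multiplication by $\omega^2$. This is precisely where Assumption~\ref{asm:01} is essential: under (i) $\Gs=c\epsilon$ the factor $1+ic/\omega$ pulls out of $\nabla\cdot\big((\epsilon+i\Gs/\omega)\nabla\cdot\big)$ and renders the scalar problem $\omega$-independent up to a scalar, whereas under (ii) the nested piecewise-constant geometry (with $\Sigma_j\Subset\Sigma_{j-1}$ and connected complements) lets one solve the transmission problem layer by layer through $N_\varepsilon$ and $\GL_\varepsilon$ with estimates uniform in the large parameter $\Gs/\omega$. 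Establishing this uniform control, and thereby legitimizing the $\Ocal(\omega^2)$ remainder in \eqref{eq:opeqB02}, is the technical heart of the argument.
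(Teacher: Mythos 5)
Your overall strategy --- exploit the block lower-triangular structure of $\Acal_B^0$, reduce each diagonal block to a scalar transmission problem, and absorb the $\Ocal(\omega^2)$ perturbation by a Neumann series --- matches the paper's, and your energy argument for injectivity of $I_3-D^2\Vcal_B^0\tilde\gamma$ (null elements are gradients, the potential solves a divergence-form equation, and $\mathrm{Re}(1+\tilde\gamma)=\epsilon/\epsilon_0>0$ forces it to vanish) is sound. However, there is a genuine gap in how you pass from injectivity to invertibility. You assert that $D^2\Vcal_B^0$ maps $L^2(B)^3$ into $H^1(B)^3$ and is therefore compact on $L^2(B)^3$, so that each diagonal block is a compact perturbation of the identity. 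This is false: $\Vcal_B^0$ maps $L^2$ into $H^2$, so $D^2\Vcal_B^0[\Phi]$ is in general only in $L^2$; the improved mapping property in Lemma~\ref{le:opeig01} relies on the integration by parts \eqref{eq:ibp01}, which requires $\Phi\in H_0(\mathrm{div};B)$. More decisively, Lemma~\ref{le:opeig01} itself exhibits an infinite-dimensional eigenspace of $D^2\Vcal_B^0$ for the eigenvalue $-1$ (take $\Phi=\nabla\psi$ with $\psi$ smooth, compactly supported in $B$ and non-harmonic, for which $D^2\Vcal_B^0[\nabla\psi]=-\nabla\psi$), and a compact operator cannot have a nonzero eigenvalue of infinite multiplicity. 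Hence $I_3-D^2\Vcal_B^0\tilde\gamma$ is not ``identity plus compact,'' the Fredholm alternative does not apply, and your argument never establishes existence (surjectivity). The paper supplies existence constructively: under Assumption~\ref{asm:01}(i) it represents the solution of the scalar problem \eqref{eq:new01} via the Neumann function and a single layer potential as in \eqref{solrep} and invokes the unique solvability of the integral system \eqref{inteqn}; under Assumption~\ref{asm:01}(ii) it reduces \eqref{eq:forho01} to the explicitly solvable Poisson equation \eqref{eq:holap02}. You would need to replace the Fredholm step by such a construction, or by a direct variational or spectral argument for the transmission problem.

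A secondary issue: you correctly observe that $\tilde\gamma$ contains the term $i\Gs/(\omega\epsilon_0)$, so that $(\Acal_B^0)^{-1}$ depends on $\omega$ and the $\Ocal(\omega^2)$ remainder in \eqref{eq:opeqB02} requires a bound on $(\Acal_B^0)^{-1}$ that is uniform as $\omega\rightarrow+0$; but you then declare this ``the technical heart'' and leave it unproved. Since the asserted remainder estimate is precisely the content of the lemma, deferring this point leaves the proof incomplete there as well.
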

\begin{proof}
By direct computations, we first have the following asymptotic expansion for \eqnref{eq:opeqB01} (see \eqnref{eq:expanA2} in what follows for the higher order expansion),
\begin{equation}\label{eq:lll1}
\Acal_{B}^{k_0}= \Acal_{B}^{0}+\Ocal(\omega^2),
\end{equation}
where $\Acal_B^0$ stands for $\Acal_B^{k_0}$ with $k_0$ formally replaced by $0$.
{In view of \eqnref{eq:opeqB01} and \eqref{eq:lll1}, it suffices to show that $\Acal_{B}^{0}$ is invertible and its inverse is given by \eqref{eq:invA}.}
By \eqnref{eq:expanM01}, $\Acal_B^0$ can be represented by
$$\Acal_B^0=I_6- \Mcal_B^0\left(
\begin{array}{cc}
\tilde{\gamma} & 0\\
0 & \tilde\mu
\end{array}
\right).$$
First, we consider the following equation
\beq\label{eq:invert01}
(I_3-D^2\Vcal_B^0\tilde\gamma)[\omega\bE]=i/\epsilon_0 D^2\Vcal_B^0[\bJ] \quad \mbox{in} \ \RR^3
\eeq
with $\bE$ radiating at infinity. One has $\bE\in H_{loc}(\mbox{curl}; \RR^3)$ and satisfies
$$\nabla\times \bE=0, \quad \nabla\cdot ((1+\tilde\gamma)\omega\bE)=i/\epsilon_0\chi(B)\nabla\cdot\bJ \quad \mbox{in} \quad \RR^3.$$
Hence, there exists $u\in H_{loc}^1(\RR^3)$ such that $\bE=\nabla u$ and
\beq\label{eq:forho01}
\nabla\cdot((1+\tilde\gamma)\nabla u)=i\omega^{-1}\epsilon_0^{-1}\chi(B)\nabla\cdot\bJ \quad \mbox{in} \ \RR^3,
\eeq
where $u$ is radiating at infinity.

By the condition (i) in Assumption~\ref{asm:01} that $\Gs=c\epsilon$, one has $1+\tilde\gamma=(1+ci/\omega)\epsilon\epsilon_0^{-1}$
or equivalently,
\beq\label{eq:new01}
\nabla\cdot(\epsilon\nabla u)=c_1(\omega)\chi(B)\nabla\cdot\bJ \quad \mbox{in} \ \RR^3,
\eeq
where $c_1(\omega):=i(\omega+ci)^{-1}$ and $u$ is radiating at infinity. It is readily seen that the existence and uniqueness of a solution to \eqnref{eq:invert01} in $H_{loc}(\mbox{curl}; \RR^3)$ is equivalent to the existence and uniqueness of a solution to \eqnref{eq:new01} in $H_{loc}^1(\RR^3)$. The uniqueness of the solution to \eqnref{eq:new01} is known. In fact, it is equivalent to showing the unique trivial solution of the following equation
$$
\left\{
\begin{array}{ll}
\nabla \cdot\epsilon \nabla u=0 & \mbox{in }  B, \\
\Delta u =0 & \mbox{in } \RR^3 \setminus \overline{B}, \\
\ds \frac{\p u}{\p \nu} \Big|_+=\epsilon \frac{\p u}{\p \nu} \Big|_- & \mbox{on }  \p B,\\
u|_+=u|_- & \mbox{on } \partial B,\\
u(x)=O(\|x\|^{-1}) & \mbox{as } \|x\| \rightarrow \infty,
\end{array}
\right.
$$
which can be found in, e.g., \cite{HK07:book}. For the existence of a solution to \eqnref{eq:new01}, we seek the solution in the following form
 \beq\label{solrep}
 u(\Bx) =
 \begin{cases}
& \displaystyle{\int_{\p B} N_{\epsilon}(\Bx, \By) \psi(\By)ds_\By}\\
& \displaystyle{- c_1(\omega)\int_B N_{\epsilon}(\Bx,\By)(\nabla\cdot\bJ)(\By)d\By} +C,\quad   \Bx \in B,\medskip\\
 & \Scal_B^0 [\phi](\Bx), \qquad  \Bx \in \RR^3 \setminus B,
 \end{cases}
 \eeq
where $(\phi, \psi)\in H^{-1/2}(\p B)\times H_0^{-1/2}(\p B)$ and $C$ is a constant that satisfies
$$
C= \frac{1}{|\p B|}\int_{\p B} \Scal_B^0[\phi](\Bx)ds_\Bx.
$$
By using the transmission conditions, which are implied in \eqnref{eq:new01} and \eqnref{solrep}, $\phi$ and $\psi$ satisfy the following system of integral equations on $\partial B$,
 \beq\label{inteqn}
 \left \{
 \begin{array}{l}
  \ds -\Scal_B^0 [\phi] + \frac{1}{|\p B|} \int_{\p B} \Scal_B^0 [\phi] + \GL_{\epsilon}[\psi] = c_1(\omega)\int_B N_{\epsilon}(\cdot,\By)(\nabla\cdot\bJ)(\By)d\By,\\
 \nm
 \ds -(\frac{1}{2} I + (\Kcal_B^0)^*)[\phi] + \psi = 0.
 \end{array} 
 \right.
 \eeq
The unique solvability of \eqnref{inteqn} in $H^{-1/2}(\p B)\times H_0^{-1/2}(\p B)$ can be found in \cite{ADKL14}, noting that
$\int_B N_{\epsilon}(\cdot,\By)(\nabla\cdot\bJ)(\By)d\By$ is in $H_0^{1/2}(\p B)$. We have proved the unique solvability of \eqnref{eq:new01}, which in turn implies the unique solvability of \eqnref{eq:invert01}.

We proceed to consider the other case that ($\epsilon$, $\Gs$) fulfils (ii) in Assumption~\ref{asm:01}, namely, $\epsilon$ and $\sigma$ are of the form specified in \eqref{eq:multilayered}. Let
$$u=u^R+ iu^I,$$
where $u^R$ and $u^I$ represent the real and imaginary parts of $u$, respectively. Then \eqnref{eq:forho01} can be expanded to the following equations
\beq\label{eq:holap01}
\left\{
\begin{array}{ll}
\epsilon^{(j)}\Delta u^R-\Gs^{(j)}\omega^{-1}\Delta u^I=0, &\mbox{in} \quad \Sigma_{j-1}\setminus\overline\Sigma_{j},\\
\epsilon^{(j)}\Delta u^I+\Gs^{(j)}\omega^{-1}\Delta u^R=\omega^{-1}\nabla\cdot \bJ, &\mbox{in} \quad \Sigma_{j-1}\setminus\overline\Sigma_{j},\\
\epsilon^{(N+1)}\Delta u^R-\Gs^{(N+1)}\omega^{-1}\Delta u^I=0, &\mbox{in} \quad\Sigma_N,\\
\epsilon^{(N+1)}\Delta u^I+\Gs^{(N+1)}\omega^{-1}\Delta u^R=\omega^{-1}\nabla\cdot \bJ. &\mbox{in} \quad \Sigma_N,
\end{array}
\right.
\eeq
where $j=1,2,\ldots, N$.
By \eqnref{eq:holap01}, one can then obtain that
\beq\label{eq:holap02}
\Delta u=\left(\sum_{j=1}^N c_j\chi(\Sigma_{j-1}\backslash\overline{\Sigma_j})+c_{N+1}\chi(\Sigma_{N})\right)\nabla\cdot\bJ
\eeq
holds in $\RR^3$, where
$$
c_j(\omega)=\frac{\Gs^{(j)}+i\epsilon^{(j)}\omega}{(\epsilon^{(j)})^2\omega^2+(\Gs^{(j)})^2}, \quad j=1,2, \ldots, N+1.
$$
Therefore, by \eqref{eq:holap02} and the fact that $u$ is radiating at infinity, it is straightforward to show that \eqnref{eq:holap02} has a unique solution.

Next, we consider the following equation
\beq\label{eq:invert02}
(I_3-D^2\Vcal_B^0\tilde\mu)[\bH]=\nabla\times \Vcal_B^0[\bJ] \quad \mbox{in} \ \RR^3
\eeq
where $\bH$ is radiating at infinity, then one has $\bH\in H_{loc}(\mbox{curl}; \RR^3)$ and
$$\nabla\times( \bH-\nabla\times\Vcal_B^0[\bJ])=0, \quad \nabla\cdot ((1+\tilde\mu)\bH)=0, \quad \mbox{in} \quad \RR^3.$$
Noting that $1+\tilde\mu=\mu$, there exists $u\in H_{loc}^1(\RR^3)$ such that $\bH=\nabla u +\nabla\times\Vcal_B^0[\bJ]$ and
\beq\label{eq:new02}
\nabla\cdot \mu\nabla u=-\nabla\cdot(\mu\nabla\times\Vcal_B^0[\bJ]) \quad \mbox{in} \ \RR^3.
\eeq
Since $\nabla\cdot(\mu\nabla\times\Vcal_B^0[\bJ])=0$ in $\RR^3\setminus \overline{B}$, one can prove similarly that \eqnref{eq:new02} has a unique solution, and thus \eqnref{eq:invert02} has a unique solution.

Now we consider the following more general case. Suppose $\mathbf{M}\in H_{loc}(\mbox{curl}; \RR^3)\cap H_{loc}(\mbox{div}; \RR^3)$ and $\varepsilon\in L^{\infty}(\RR^3)$ and $\varepsilon=0$ in $\RR^3\setminus \overline{B}$. Let $\bF$ be in $H_{loc}(\mbox{curl}; \mathbb{R}^3)\cap H(\mbox{div}((1+\varepsilon)\cdot); \RR^3)$ and radiating at ininity. We prove that the following equation is uniquely solvable
\beq\label{eq:gen01}
(I_3-D^2\Vcal_B^0\varepsilon)[\bF]=\bf M.
\eeq
In fact, by taking curl and divergence of both sides of \eqnref{eq:gen01}, respectively, one has
\beq\label{eq:gen02}
\nabla\times (\bF-{\bf M)}=0, \quad \nabla\cdot (1+\varepsilon)\bF=\nabla\cdot {\bf M}.
\eeq
Then by \eqnref{eq:gen02} there exists $u\in H_{loc}^1(\RR^3)$ such that $\bF-{\bf M}=\nabla u$ and
\beq\label{eq:gen03}
\nabla\cdot (1+\varepsilon)\nabla u = -\nabla\cdot (\varepsilon {\bf M}),
\eeq
and $u$ radiating at infinity. Noting that $\nabla \cdot (\varepsilon {\bf M})=0$ in $\RR^3\setminus \overline{B}$, one can prove the unique solvability of \eqnref{eq:gen03} by using exactly the same method as that for \eqnref{eq:new01} and \eqnref{eq:new02}.

To sum up, we can solve \eqnref{eq:opeqB01} as follows. From the first equation in \eqnref{eq:opeqB01} one can uniquely obtain
$$\omega\bE=i/\epsilon_0(I_3-D^2\Vcal_B^0\tilde\gamma)^{-1}D^2\Vcal_B^0[\bJ]+\Ocal(\omega^2),$$
then by using the second equation, that is,
$$
(I_3- D^2\Vcal_B^0\tilde\mu)[\bH]=-i\epsilon_0\nabla\times\Vcal_B^0[\tilde\gamma\omega\bE]+\nabla\times\Vcal_B^0[\bJ]+\Ocal(\omega^2),
$$
one can uniquely obtain that
$$\bH=-i\epsilon_0(I_3-D^2\Vcal_B^0\tilde\gamma)^{-1}\nabla\times\Vcal_B^0[\tilde\gamma\omega\bE]+ (I_3-D^2\Vcal_B^0\tilde\gamma)^{-1}\nabla\times\Vcal_B^0[\bJ]+\Ocal(\omega^2).$$
Using the above facts, one can directly verify that
$(\Acal_{B}^{0})^{-1}$ is given in \eqref{eq:invA}. Finally, \eqref{eq:opeqB02} can be shown by straightforward asymptotic expansion with respect to $\omega$.

The proof is complete.
\end{proof}

In \eqref{eq:opeqB02}, we only derived the leading-order expansion of $[\omega\mathbf{E},\mathbf{H}]$. In some cases in what follows, we shall also need the higher-order expansion of $[\omega\mathbf{E},\mathbf{H}]$. Next, we present the corresponding high-order expansions. Since the proof is completely similar to that of Lemma~\ref{le:01}, we only sketch it.  Suppose that $\omega$ is sufficiently small and $\Bx\in B$, then there holds the following expansion for $\Mcal_B^{k_0}$ in \eqnref{eq:opM},
\beq\label{eq:expanA}
\Mcal_B^{k_0}=\Mcal_B^0+k_0^2\Mcal_{B,1}+\Ocal(\omega^3),
\eeq
where $\Mcal_B^0$ is defined in \eqnref{eqopM0} and $\Mcal_{B,1}$ is defined by
$$
\Mcal_{B,1}:=\left(
\begin{array}{cc}
\Vcal_B^0+D^2\Lcal_{B} & i/\epsilon_0\nabla\times\Vcal_{B}^{0}\\
-i\epsilon_0\nabla\times\Lcal_{B} & \Vcal_B^0+D^2\Lcal_{B}
\end{array}
\right),
$$
with $\Lcal_B$ given by
\beq\label{eq:L}
\Lcal_B[\Phi]:=-\frac{1}{4\pi}\int_B \|\Bx-\By\|\Phi(\By)d\By.
\eeq
Then by definition \eqnref{eq:opA} we have the following expansion for $\Acal_B^{k_0}$,
\beq\label{eq:expanA2}
\Acal_B^{k_0}=\Acal_{B}^0-k_0^2\Mcal_{B,1}\Upsilon+\Ocal(\omega^3),
\eeq
where $\Upsilon$ is defined by
\beq\label{eq:Upsi1}
\Upsilon:=\left(
\begin{array}{cc}
\tilde{\gamma} & 0\\
0 & \tilde\mu
\end{array}
\right),
\eeq
and
$$\Acal_B^0:=I_6-\Mcal_B^0\Upsilon.$$
With the above computations, we obtain
\begin{thm}\label{th:solexp1}
Suppose that $\epsilon, \mu$ and $\sigma$ are as in Lemma~\ref{le:01}, and in particular note that $\epsilon$ and $\Gs$ satisfy Assumption \ref{asm:01}. Then there exists $\omega_0\in\mathbb{R}_+$ such that for $\omega\leq \omega_0$, the solution
to \eqnref{eq:pss} and \eqnref{eq:radia2} admits the following asymptotic expansion in $B$,
\begin{align}
\left(
\begin{array}{c}
\omega\bE\\
\bH
\end{array}
\right)
=&(\Acal_B^0)^{-1}
\left(
\begin{array}{c}
i/\epsilon_0D^2\Vcal_B^0[\bJ]\\
\nabla\times\Vcal_B^0[\bJ]
\end{array}
\right)+
k_0^2(\Acal_B^0)^{-1}
\left(
\begin{array}{c}
i/\epsilon_0(\Vcal_B^0+D^2\Lcal_B)[\bJ]\\
\nabla\times\Lcal_B[\bJ]
\end{array}
\right)\nonumber\\
&+k_0^2(\Acal_B^0)^{-1}\Mcal_{B,1}\Upsilon(\Acal_B^0)^{-1}
\left(
\begin{array}{c}
i/\epsilon_0D^2\Vcal_B^0[\bJ]\\
\nabla\times\Vcal_B^0[\bJ]
\end{array}
\right)+\Ocal(\omega^3).\label{eq:solexp1}
\end{align}
\end{thm}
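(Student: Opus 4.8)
The plan is to solve the operator equation \eqref{eq:opeqB01} by inverting $\Acal_B^{k_0}$ and then to expand both the inverse operator $(\Acal_B^{k_0})^{-1}$ and the right-hand forcing term to second order in $\omega$, finally multiplying the two expansions and collecting terms through order $k_0^2=\omega^2\epsilon_0\mu_0$. The groundwork is already in place: Lemma~\ref{le:01} guarantees that $\Acal_B^0$ is invertible with inverse \eqref{eq:invA}, and the expansion \eqref{eq:expanA2} shows $\Acal_B^{k_0}=\Acal_B^0-k_0^2\Mcal_{B,1}\Upsilon+\Ocal(\omega^3)$, so that the perturbation $\Acal_B^{k_0}-\Acal_B^0$ is of order $\omega^2$ in the relevant operator norm. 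Choosing $\omega_0$ small enough that $\|(\Acal_B^0)^{-1}(\Acal_B^{k_0}-\Acal_B^0)\|<1$ for all $\omega\le\omega_0$, a Neumann series then yields
\[
(\Acal_B^{k_0})^{-1}=(\Acal_B^0)^{-1}+k_0^2(\Acal_B^0)^{-1}\Mcal_{B,1}\Upsilon(\Acal_B^0)^{-1}+\Ocal(\omega^3).
\]

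Next I would expand the forcing term. Writing the kernel $\Gamma_{k_0}$ from \eqref{Gk} for small $k_0$ as $\Gamma_{k_0}(\Bx)=\Gamma_0(\Bx)+\tfrac{ik_0}{4\pi}+\Ocal(k_0^2)$, the order-$k_0$ contribution is a \emph{constant} kernel, so its convolution against $\bJ$ is a constant vector and is annihilated by the derivative operators $D^2=\nabla\nabla\cdot$ and $\nabla\times$; this is precisely why the corrections proceed in even powers of $\omega$ and the leading correction is $\Ocal(\omega^2)$ rather than $\Ocal(\omega)$. Collecting the $k_0^2$ coefficient of $\Vcal_B^{k_0}$ in terms of $\Lcal_B$ from \eqref{eq:L}, exactly as in the derivation of $\Mcal_{B,1}$ preceding \eqref{eq:expanA}, the right-hand side of \eqref{eq:opeqB01} expands on $B$ as
\[
\begin{pmatrix} i/\epsilon_0(k_0^2I_3+D^2)\Vcal_B^{k_0}[\bJ]\\ \nabla\times\Vcal_B^{k_0}[\bJ]\end{pmatrix}=\begin{pmatrix} i/\epsilon_0 D^2\Vcal_B^0[\bJ]\\ \nabla\times\Vcal_B^0[\bJ]\end{pmatrix}+k_0^2\begin{pmatrix} i/\epsilon_0(\Vcal_B^0+D^2\Lcal_B)[\bJ]\\ \nabla\times\Lcal_B[\bJ]\end{pmatrix}+\Ocal(\omega^3).
\]

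Multiplying the two expansions and retaining terms through order $k_0^2$ reproduces \eqref{eq:solexp1} summand by summand: the product of the two zeroth-order factors gives the first term; the leading inverse $(\Acal_B^0)^{-1}$ acting on the order-$k_0^2$ forcing gives the second term; and the order-$k_0^2$ part $k_0^2(\Acal_B^0)^{-1}\Mcal_{B,1}\Upsilon(\Acal_B^0)^{-1}$ of the inverse, acting on the leading forcing, gives the third term. All remaining cross terms are of order $\omega^3$ or higher.

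The main obstacle I anticipate is not algebraic but is the consistent control of the remainders: one must check that each $\Ocal(\omega^3)$ symbol is genuinely bounded in the operator norm on the relevant space (here $H^2_{loc}$, as in Proposition~\ref{prop:01}), so that the Neumann series converges for $\omega\le\omega_0$ and two truncated asymptotic series may be multiplied and retruncated without hidden loss of order. Since $\bJ\in H_0(\mathrm{div};B)$ is fixed and the operators $\Vcal_B^0$, $\Lcal_B$, $\Mcal_{B,1}$ and $(\Acal_B^0)^{-1}$ are all bounded on the function spaces at hand—the last by Lemma~\ref{le:01}—these estimates follow exactly as in the proof of that lemma, which is why the argument need only be sketched.
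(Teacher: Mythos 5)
Your proposal is correct and follows essentially the same route as the paper, which only sketches the argument: it relies on the invertibility of $\Acal_B^0$ from Lemma~\ref{le:01}, the expansions \eqref{eq:expanA}--\eqref{eq:expanA2}, and a Neumann-series inversion of $\Acal_B^{k_0}$ combined with the second-order expansion of the forcing term in \eqref{eq:opeqB01}. Your additional observation that the order-$k_0$ (constant-kernel) term is annihilated by $D^2$ and $\nabla\times$, so the expansion proceeds in even powers of $\omega$, correctly fills in the detail the paper leaves implicit.
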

\begin{cor}\label{co:01}
Suppose that $\epsilon, \mu$ and $\sigma$ are those in Theorem~\ref{th:solexp1}, and particularly $\epsilon$ and $\Gs$ satisfy Assumption \ref{asm:01}. Then there exists $\omega_0\in\mathbb{R}_+$ such that for $\omega\leq \omega_0$, the solution
to \eqnref{eq:pss} and \eqnref{eq:radia2} admits the following asymptotic expansions in $B$
\beq\label{eq:fist01}
\omega\bE=i/\epsilon_0(I_3- D^2\Vcal_B^0\tilde\gamma)^{-1}D^2\Vcal_B^0[\bJ]+\Ocal(\omega^2),
\eeq
and
\beq\label{eq:fist02}
\bH=-i\omega\epsilon_0(I_3- D^2\Vcal_B^0\tilde\mu)^{-1}\nabla\times\Vcal_B^0[\bE]+(I_3-D^2\Vcal_B^0\tilde\mu)^{-1}\nabla\times\Vcal_B^0[\bJ]+\Ocal(\omega^2).
\eeq
Furthermore, if $\nabla\cdot \bJ=0$, then we have the following asymptotic expansions in $B$,
  \begin{align}
  \bE=& i \omega\mu_0(I_3-D^2\Vcal_B^0\tilde\gamma)^{-1}\nabla\times\Vcal_B^0\tilde\mu(I_3-D^2\Vcal_B^0\tilde\mu)^{-1}\nabla\times\Vcal_B^0[\bJ]\nonumber\\
  &+i\omega\mu_0(I_3-D^2\Vcal_B^0\tilde\gamma)^{-1}\Vcal_B^0[\bJ]+\Ocal(\omega^2),\label{eq:highordE01}
  \end{align}
and
  \beq\label{eq:highordH01}
  \begin{split}
  \bH=&(I_3-D^2\Vcal_B^0\tilde\mu)^{-1}\nabla\times\Vcal_B^0[\bJ]+\omega(I_3-D^2\Vcal_B^0\tilde\mu)^{-1}\nabla\times\Vcal_B^0[\tilde\gamma\bE]\\
  &+k_0^2(I_3-D^2\Vcal_B^0\tilde\mu)^{-1}\Big(\nabla\times\Lcal_B^0+\Vcal_B^0\tilde\mu
(I_3-D^2\Vcal_B^0\tilde\mu)^{-1}\nabla\times\Vcal_B^0\Big)[\bJ]\\
  &+\Ocal(\omega^3).
  \end{split}
  \eeq
\end{cor}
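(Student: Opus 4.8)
The plan is to read Corollary~\ref{co:01} off directly from the asymptotic expansion \eqref{eq:solexp1} of Theorem~\ref{th:solexp1}, using two ingredients: the explicit block structure of $(\Acal_B^0)^{-1}$ recorded in \eqref{eq:invA}, and the integration-by-parts identity $D^2\Vcal_B^0[\Phi]=\nabla\Vcal_B^0[\nabla\cdot\Phi]$ established in \eqref{eq:ibp01}, together with its evident analogue $\nabla\cdot\Lcal_B[\Phi](\Bx)=-\tfrac{1}{4\pi}\int_B\|\Bx-\By\|(\nabla\cdot\Phi)(\By)\,d\By$ for the operator $\Lcal_B$ of \eqref{eq:L}. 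The latter identities show that both $D^2\Vcal_B^0[\bJ]$ and $D^2\Lcal_B[\bJ]$ vanish as soon as $\nabla\cdot\bJ=0$, which is the conceptual crux of the second half of the statement.

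First I would establish \eqref{eq:fist01} and \eqref{eq:fist02}, which are just the leading-order part of \eqref{eq:solexp1}, i.e. $(\Acal_B^0)^{-1}$ applied to the vector $\big(i/\epsilon_0\,D^2\Vcal_B^0[\bJ],\ \nabla\times\Vcal_B^0[\bJ]\big)^{\top}$. Since the upper-right block of \eqref{eq:invA} vanishes, the first (electric) component decouples and reading off the top-left block yields \eqref{eq:fist01} at once. For the magnetic component I would apply the bottom row of \eqref{eq:invA}: the $(2,2)$ block produces the term $(I_3-D^2\Vcal_B^0\tilde\mu)^{-1}\nabla\times\Vcal_B^0[\bJ]$, while in the $(2,1)$ block I would recognise that $(I_3-D^2\Vcal_B^0\tilde\gamma)^{-1}D^2\Vcal_B^0[\bJ]$ equals $-i\epsilon_0\,\omega\bE$ modulo $\Ocal(\omega^2)$ by \eqref{eq:fist01}; substituting this re-expresses the first contribution through $\bE$ and gives \eqref{eq:fist02}.

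Next, for the divergence-free case I would impose $\nabla\cdot\bJ=0$ and invoke the vanishing of $D^2\Vcal_B^0[\bJ]$ and $D^2\Lcal_B[\bJ]$. The leading vector in \eqref{eq:solexp1} then collapses to $\big(0,\ \nabla\times\Vcal_B^0[\bJ]\big)^{\top}$, so the order-one part of the electric field disappears and the first nonzero contribution to $\omega\bE$ is of order $k_0^2$. Extracting the first component of the two $k_0^2$ terms in \eqref{eq:solexp1}, using $D^2\Lcal_B[\bJ]=0$ to drop the $D^2\Lcal_B$ piece, writing $k_0^2=\omega^2\epsilon_0\mu_0$ and dividing by $\omega$ produces exactly the two summands in \eqref{eq:highordE01}. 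For $\bH$ I would extract the second component: the surviving order-one term is $(I_3-D^2\Vcal_B^0\tilde\mu)^{-1}\nabla\times\Vcal_B^0[\bJ]$; the $\nabla\times\Lcal_B$ contribution of the first $k_0^2$ term and the $\Vcal_B^0\tilde\mu(I_3-D^2\Vcal_B^0\tilde\mu)^{-1}\nabla\times\Vcal_B^0$ contribution coming from $\Mcal_{B,1}\Upsilon(\Acal_B^0)^{-1}$ assemble into the bracketed $k_0^2$ operator, while the residual $\Ocal(\omega^2)$ cross term is repackaged as $\omega(I_3-D^2\Vcal_B^0\tilde\mu)^{-1}\nabla\times\Vcal_B^0[\tilde\gamma\bE]$ by means of \eqref{eq:highordE01}, yielding \eqref{eq:highordH01}.

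The routine but delicate part is the bookkeeping of the $6\times6$ block products: one must track precisely which entries of $(\Acal_B^0)^{-1}$, $\Upsilon$ and $\Mcal_{B,1}$ combine, and verify that after setting $\nabla\cdot\bJ=0$ exactly the listed terms remain while every other contribution is either annihilated by the identities for $D^2\Vcal_B^0$ and $D^2\Lcal_B$ or absorbed into $\Ocal(\omega^3)$. I expect the main obstacle to be confirming that the cross terms generated by the $(2,1)$ blocks and by $D^2\Lcal_B[\tilde\mu\,\cdot]$ are consistently accounted for, either as genuinely higher-order remainders or, in the magnetic case, through the implicit $\bE$-dependence that makes \eqref{eq:highordH01} a semi-explicit rather than a fully closed expansion.
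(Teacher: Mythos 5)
Your proposal is correct and follows essentially the same route as the paper: read \eqref{eq:fist01}--\eqref{eq:fist02} off the leading block of \eqref{eq:solexp1} via \eqref{eq:invA}, and in the divergence-free case kill $D^2\Vcal_B^0[\bJ]$ and $D^2\Lcal_B[\bJ]$ (the paper cites \eqref{eq:opeig02} for this, which is the same integration-by-parts fact you invoke through \eqref{eq:ibp01}) before extracting the $k_0^2$ terms. Your block-by-block bookkeeping, including the repackaging of the $(2,1)$-block contribution through $\bE$, is exactly the computation the paper's one-line proof leaves implicit.
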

\begin{proof}
By using \eqnref{eq:defnew}, \eqnref{eq:invA} and \eqnref{eq:solexp1} one can readily show that \eqnref{eq:fist01} and \eqnref{eq:fist02} hold in $B$. If $\nabla\cdot\bJ=0$, then $\bJ\in H_0(\mbox{div}; B)$ and by \eqnref{eq:opeig02}, one obtains $D^2\Vcal_B^0[\bJ]=0$ and $D^2\Lcal_B[\bJ]=0$. By using the higher order expansion in \eqnref{eq:solexp1}, one can show that \eqnref{eq:highordE01} and \eqnref{eq:highordH01} hold in $B$.

The proof is complete.
\end{proof}
\begin{rem}\label{re:rvs01}
We shall make use of the four asymptotic expansions contained in Corollary \ref{co:01} to recover the source $\bJ$ and the material parameters $\epsilon$ and $\mu$ as they are contained in the coefficients of the expansions. It is noted that those coefficients satisfy certain integral identities, and the unknowns are coupled together in those identities. We would also like to point out that one cannot have $\nabla\cdot \bJ=0$ and $\nabla\times \bJ=0$ hold simultaneously. Indeed, noting that $\bJ$ is compactly supported in $B$, and if both $\nabla\cdot \bJ=0$ and $\nabla\times \bJ=0$, then one must have that $\bJ\equiv 0$ by integrating by parts. Using this observation, together with the fact that $\nabla\times\Vcal_B^0[\bJ]=\Vcal_B^0[\nabla\times\bJ]$, we can then assume that the leading-order term in \eqref{eq:highordH01} is not vanishing.
%

\end{rem}

\section{Unique recovery results for the inverse problem}

In this section, we present the main unique recovery results for the inverse problem \eqref{eq:ip2}, which are contained in Theorems~\ref{th:unique1} and \ref{th:unique2}. To that end, we first derive three critical auxiliary lemmas.

Henceforth, for a bounded domain $\Omega$ and a distribution $\psi$, we use $\psi|_{\partial\Omega}^+$ and $\psi|_{\partial\Omega}^-$, respectively, to signify the traces of $\psi$ on $\partial\Omega$ when one approaches $\partial\Omega$ from outside and inside of $\Omega$. The first auxiliary result is given as follows.
 \begin{lem}
Suppose that $\epsilon, \mu$ and $\sigma$ are similar to those described in Theorem~\ref{th:solexp1}, and particularly $\Gs=c\epsilon$ in $B$ with $c\geq 0$ a constant. Also, suppose that $\bJ\in H_0(\mathrm{div}; B)$, which satisfies \eqnref{eq:outbd1}. If $\nabla\cdot \bJ\neq 0$, then for $\omega\in\mathbb{R}_+$ sufficiently small, $\bE$ can be represented by $\bE=\nabla (u_1+u_2)+\Ocal(\omega)$ in $B$, where $u_1$ and $u_2$ satisfy
\beq\label{eq:solE01}
\begin{split}
u_1&=\left\{
\begin{array}{ll}
\ds \int_{\p B} N_{\epsilon}(\cdot,\By)(\nu\cdot \epsilon\bE\big|_{\p B}^{-})(\By)d s_\By & \mbox{in} \quad B\\
\ds -\Scal_B^0\GL_{1}^{-1} (\GL_{1}-\GL_{\epsilon})[\nu\cdot \epsilon\bE\big|_{\p B}^{-}]+C_1 & \mbox{in}\quad \RR^3\setminus\overline{B},
\end{array}
\right.  \\
u_2&=\left\{
\begin{array}{ll}
\ds c_1(\omega)\int_B N_{\epsilon}(\cdot,\By)(\nabla\cdot \bJ)(\By)d\By & \mbox{in} \quad B\\
\ds c_1(\omega)\Scal_B^0\GL_{1}^{-1}\Big[\int_B N_{\epsilon}(\cdot,\By)(\nabla\cdot \bJ)(\By)d\By\Big] +C_2 & \mbox{in}\quad \RR^3\setminus\overline{B},
\end{array}
\right.
\end{split}
\eeq
with $C_1$ and $C_2$ two constants, respectively, given by
$$
C_1=\frac{1}{|\p B|}\int_{\p B}\Scal_B^0\GL_{1}^{-1} (\GL_{1}-\GL_{\epsilon})[\nu\cdot \epsilon\bE\big|_{\p B}^{-}](\By)d s_\By,
$$
and
$$
C_2=-\frac{c_1(\omega)}{|\p B|}\int_{\p B}\ \Scal_B^0\GL_{1}^{-1}\Big[\int_B N_{\epsilon}(\cdot,\By)(\nabla\cdot \bJ)(\By)d\By\Big](\Bx)d s_\Bx,
$$
and $c_1(\omega)=i\epsilon_0^{-1}(\omega+ci)^{-1}$. If $\nabla\cdot \bJ=0$ then $\bH$ can be represented as
\beq\label{eq:solH01}
(I_3+D^2\Vcal_B^0)[\bH]=-\mu_0^{-1}\nabla\Scal_B^0[(\nu\cdot \mu\bH)|_{\p B}^{-}]+\Vcal_B^0[\nabla\times \bJ]+\Ocal(\omega^2) \quad \mbox{in} \quad B.
\eeq
\end{lem}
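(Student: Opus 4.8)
The plan is to use the low-frequency asymptotics already established to collapse the vectorial system into scalar elliptic transmission problems, and then to invoke the Neumann-function and single-layer representations of Lemma~\ref{le:01}.

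First I would treat the case $\nabla\cdot\bJ\neq 0$. By \eqref{eq:fist01} of Corollary~\ref{co:01} and the fact that $D^2=\nabla\nabla\cdot$, the leading term of $\omega\bE$ is curl-free (exactly as in the analysis of \eqref{eq:invert01} in Lemma~\ref{le:01}), so $\bE=\nabla u+\Ocal(\omega)$ for a radiating potential $u\in H^1_{loc}(\RR^3)$. Under the standing hypothesis $\sigma=c\epsilon$, that same reduction shows $u$ solves the scalar transmission problem \eqref{eq:new01}, namely $\nabla\cdot(\epsilon\nabla u)=c_1(\omega)\chi(B)\nabla\cdot\bJ$ in $\RR^3$, radiating at infinity. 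Writing $g:=\nu\cdot\epsilon\bE|_{\p B}^-$ and applying the Green identity attached to the Neumann function \eqref{revn}, I would split $u=u_1+u_2$, where $u_2$ is the particular solution generated by the interior source $\nabla\cdot\bJ$ and $u_1$ solves the homogeneous $\epsilon$-equation carrying the flux $g$. Since $\bJ$ is compactly supported in $B$ one has $\int_B\nabla\cdot\bJ=0$, so the source part contributes no net boundary flux and $u_1$ indeed carries all of $g$; this yields the two interior formulas in \eqref{eq:solE01}, with $u_1|_B=\int_{\p B}N_\epsilon(\cdot,\By)g\,ds_\By$ (whose Dirichlet trace is $\GL_\epsilon[g]$) and $u_2|_B=c_1(\omega)\int_B N_\epsilon(\cdot,\By)(\nabla\cdot\bJ)(\By)\,d\By$.

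For the exterior formulas I would continue $u_1$ and $u_2$ to $\RR^3\setminus\overline B$ as radiating harmonic functions, representing each as a single-layer potential $\Scal_B^0[\phi_i]$ as in the ansatz \eqref{solrep}. The densities are fixed by the two transmission conditions across $\p B$ (continuity of the potential and of the normal flux), which I would encode through the jump relations \eqref{eq:trace} and the boundary integral system \eqref{inteqn}. Separating the contribution of $g$ from that of $\nabla\cdot\bJ$ and rewriting the Cauchy data through the interior Neumann-to-Dirichlet maps $\GL_\epsilon$ (for the $\epsilon$-equation) and $\GL_1$ (for the Laplacian) produces the densities $-\GL_1^{-1}(\GL_1-\GL_\epsilon)[g]$ and $c_1(\omega)\GL_1^{-1}[\int_B N_\epsilon(\cdot,\By)(\nabla\cdot\bJ)(\By)\,d\By]$ appearing in \eqref{eq:solE01}. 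The additive constants $C_1,C_2$ are then forced by the zero-average normalization built into \eqref{solrep}, i.e. $C_i=\frac{1}{|\p B|}\int_{\p B}\Scal_B^0[\phi_i]$, so that each exterior piece has vanishing boundary average.

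For the case $\nabla\cdot\bJ=0$ I would argue on $\bH$ directly from the exact second row of \eqref{eq:repre2}, which gives $(I_3-D^2\Vcal_B^0\tilde\mu)[\bH]=\nabla\times\Vcal_B^0[\bJ]-i\epsilon_0\nabla\times\Vcal_B^0[\tilde\gamma\omega\bE]+\Ocal(\omega^2)$; here the cross term is of order $\Ocal(\omega^2)$ because, when $\nabla\cdot\bJ=0$, the field $\omega\bE$ vanishes to leading order by \eqref{eq:fist01} and \eqref{eq:opeig02}. Two algebraic facts then finish the argument. First, $(I_3-D^2\Vcal_B^0\tilde\mu)=(I_3+D^2\Vcal_B^0)-D^2\Vcal_B^0(1+\tilde\mu)$ with $1+\tilde\mu=\mu/\mu_0$. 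Second, Maxwell's system yields $\nabla\cdot(\mu\bH)=0$ in $\RR^3$, so integrating by parts as in \eqref{eq:ibp01} in $\Vcal_B^0[\mu\mu_0^{-1}\bH]$ (whose integrand lives on $B$) leaves only the surface contribution $D^2\Vcal_B^0[\mu\mu_0^{-1}\bH]=-\mu_0^{-1}\nabla\Scal_B^0[(\nu\cdot\mu\bH)|_{\p B}^-]$. Substituting these, together with $\nabla\times\Vcal_B^0[\bJ]=\Vcal_B^0[\nabla\times\bJ]$, produces \eqref{eq:solH01}.

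The main obstacle is the exterior representation in the first case: passing from the continuity-and-flux conditions to the single-layer densities $-\GL_1^{-1}(\GL_1-\GL_\epsilon)[g]$ requires careful boundary-integral bookkeeping, in particular tracking the interplay between $\GL_\epsilon$, $\GL_1$ and the single-layer/Neumann--Poincar\'e operators and correctly pinning down the normalizing constants. By contrast, the $\bH$-identity is essentially an exact algebraic rearrangement once $\nabla\cdot(\mu\bH)=0$ is used to convert the interior operator $D^2\Vcal_B^0[\mu\mu_0^{-1}\bH]$ into the surface single-layer term.
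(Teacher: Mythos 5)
Your proposal follows essentially the same route as the paper's own proof: for $\nabla\cdot\bJ\neq 0$ it reduces \eqref{eq:fist01} to the scalar transmission problem \eqref{eq:lowfre1} and then invokes the Neumann-function/single-layer decomposition $u=u_1+u_2$ (which the paper simply quotes from \cite{ADKL14} rather than re-deriving), and for $\nabla\cdot\bJ=0$ it performs the identical rearrangement $(I_3-D^2\Vcal_B^0\tilde\mu)=(I_3+D^2\Vcal_B^0)-D^2\Vcal_B^0(1+\tilde\mu)$ followed by integration by parts using $\nabla\cdot(\mu\bH)=\Ocal(\omega^2)$ to produce the surface term $-\mu_0^{-1}\nabla\Scal_B^0[(\nu\cdot\mu\bH)|_{\p B}^-]$. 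The one step to watch is your estimate of the cross term $-i\epsilon_0\nabla\times\Vcal_B^0[\tilde\gamma\omega\bE]$: since $\tilde\gamma=\Ocal(\omega^{-1})$ whenever $\sigma\not\equiv 0$, the bound $\omega\bE=\Ocal(\omega^2)$ by itself only yields $\Ocal(\omega)$ for that term, but this is precisely the same level of looseness as the paper's own assertion that $\nabla\times\Vcal_B^0[\tilde\gamma\bE]=\Vcal_B^0[\tilde\gamma\nabla\times\bE]=\Ocal(\omega^2)$, so your argument matches the paper's in both structure and rigor.
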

\begin{proof}
Recall from \eqnref{eq:fist01} that $\bE\in H(\mbox{curl}; B)$ and satisfies
\begin{equation}\label{eq:lll2}
(I_3- D^2\Vcal_B^0\tilde\gamma)[\omega\bE]=i\epsilon_0^{-1}D^2\Vcal_B^0[\bJ]+\Ocal(\omega^2) \quad \mbox{in} \quad B.
\end{equation}
By taking $\mathrm{curl}$ of both sides of \eqref{eq:lll2}, one readily has that
\beq\label{eq:transf02}
\nabla\times \bE=\Ocal(\omega).
\eeq
Noting $\bJ\in H_0(\mbox{div}; B)$, one also has from taking $\mathrm{div}$ of both sides of \eqref{eq:lll2} that
\beq\label{eq:transf01}
\nabla\cdot (1+\tilde\gamma)\bE=-i\omega^{-1}\epsilon_0^{-1}\nabla\cdot \bJ+\Ocal(\omega).
\eeq
Let $u$ be the solution to
\beq\label{eq:lowfre1}
\left\{
\begin{array}{ll}
\nabla\cdot(\epsilon\nabla u) =-c_1(\omega)\nabla\cdot\bJ  & \mbox{in} \quad B\medskip \\
\Delta u =0 & \mbox{in } \RR^3 \setminus \overline{B} \\
\ds \frac{\p u}{\p \nu} \Big|_+=\epsilon \frac{\p u}{\p \nu} \Big|_- & \mbox{on }  \p B\\
u|_+=u|_- & \mbox{on } \partial B\\
u(x)=O(\|x\|^{-1}) & \mbox{as } \|x\| \rightarrow \infty
\end{array}
\right. ,
\eeq
where $c_1(\omega)=i\epsilon_0^{-1}(\omega+ci)^{-1}$. Noting that $\Gs=c\epsilon$ in $B$, and by combining \eqnref{eq:transf02}, \eqnref{eq:transf01} and \eqnref{eq:lowfre1}, we thus have $\bE=\nabla u+\Ocal(\omega)$.
The solution $u$ to \eqnref{eq:lowfre1} can be written as $u=u_1+u_2$ (see, e.g. \cite{ADKL14}), where
\beq\label{eq:sol01}
u_1(\Bx)=\left\{
\begin{array}{ll}
\ds \int_{\p B} N_{\epsilon}(\Bx,\By)(\nu\cdot\epsilon \bE|_{\p B}^{-})(\By)d s_\By, & \Bx\in B\\
\ds -\Scal_B^0\GL_{1}^{-1} (\GL_{1}-\GL_{\epsilon})[\nu\cdot\epsilon \bE|_{\p B}^{-}](\Bx)+C_1, & \Bx\in \RR^3\setminus\overline{B}
\end{array}
\right. ,
\eeq
and
\beq\label{eq:sol02}
u_2=\left\{
\begin{array}{ll}
\ds c_1(\omega)\int_B N_{\epsilon}(\Bx,\By)(\nabla\cdot \bJ)(\By)d\By, & \Bx\in B\\
\ds c_1(\omega)\Scal_B^0\GL_{1}^{-1}\Big[\int_B N_{\epsilon}(\cdot,\By)(\nabla\cdot \bJ)(\By)d\By\Big](\Bx)+C_2, & \Bx\in \RR^3\setminus\overline{B}
\end{array}
\right. ,
\eeq
with $C_1$  and $C_2$ two constants, respectively, given by
$$
C_1=\frac{1}{|\p B|}\int_{\p B}\Scal_B^0\GL_{\epsilon_0}^{-1} (\GL_{\epsilon_0}-\GL_{\epsilon})[\nu\cdot \epsilon\bE\big|_{\p B}^{-1}](\By)d s_\By,
$$
and
$$
C_2=-\frac{c_1(\omega)}{|\p B|}\int_{\p B}\Scal_B^0\GL_{\epsilon_0}^{-1}\Big[\int_B N_{\epsilon}(\cdot,\By)(\nabla\cdot \bJ)(\By)d\By\Big](\Bx)ds_\Bx.
$$
Therefore, \eqnref{eq:solE01} is proved.

Next, if $\nabla\cdot \bJ=0$, then by \eqnref{eq:highordH01} and noting that
$$
\nabla\times\Vcal_B^0[\tilde\gamma\bE]=\Vcal_B^0[\tilde\gamma\nabla\times\bE]=\Ocal(\omega^2),
$$
we have
\beq\label{eq:tmp04}
(I_3+ D^2\Vcal_B^0)[\bH]= D^2\Vcal_B^0[(1+\tilde\mu)\bH]+\nabla\times\Vcal_B^0[\bJ]+\Ocal(\omega^2).
\eeq
It can be readily verified that
  \beq\label{eq:tmp05}
  \nabla\cdot (1+\tilde\mu)\bH=\Ocal(\omega^2).
  \eeq
Noting that $1+\tilde\mu=\mu/\mu_0$, by \eqnref{eq:tmp04}, \eqnref{eq:tmp05} and integration by parts, we obtain
\begin{align*}
&(I_3+ D^2\Vcal_B^0)[\bH]\\
=&-\mu_0^{-1}\nabla\int_B\nabla_y\Gamma_0(\cdot-\By)\cdot (\mu\bH)(\By)d s_\By +\Vcal_B^0[\nabla\times \bJ]+\Ocal(\omega^2) \\
= &-\mu_0^{-1}\nabla\int_B\nabla_y\cdot(\Gamma_0(\cdot-\By) (\mu\bH)(\By))d s_\By +\Vcal_B^0[\nabla\times \bJ]+\Ocal(\omega^2)\\
= &- \mu_0^{-1}\nabla\Scal_B^0[(\nu\cdot\mu\bH)|_{\p B}^{-}]+\Vcal_B^0[\nabla\times \bJ]+\Ocal(\omega^2).
\end{align*}

The proof is complete.
\end{proof}

Next we present the second auxiliary result.

\begin{lem}\label{le:au01}
Let $N_\varepsilon(\Bx,\By)$ be defined in \eqnref{revn}, and suppose that $\varepsilon$ is a given positive constant in $B$. Then there holds the following identity,
\beq\label{eq:au01}
\begin{split}
&\Big(-\frac{I}{2}+\Kcal_B^0\Big)\Big[\int_{B}N_{\varepsilon}(\cdot-\By)(\nabla\cdot \bJ)(\By)d\By\Big](\Bx)\\
=& \varepsilon^{-1}\int_{B}\Gamma_0(\Bx-\By)(\nabla\cdot \bJ)(\By)d\By, \quad\Bx\in \p B.
\end{split}
\eeq
\end{lem}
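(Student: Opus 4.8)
The plan is to read the left-hand side of \eqnref{eq:au01} as the interior boundary trace of a single potential built from $N_\varepsilon$, and then to collapse that potential—via Green's representation and the jump relation \eqnref{eq:trace}—onto the bare Newtonian potential appearing on the right. Write $f:=\nabla\cdot\bJ$ and set $w(\Bx):=\int_B N_\varepsilon(\Bx,\By)f(\By)\,d\By$ for $\Bx\in B$. With this notation the left member of \eqnref{eq:au01} is $(-\tfrac{I}{2}+\Kcal_B^0)[w|_{\p B}]$, while the right member is $v|_{\p B}$ with $v:=\varepsilon^{-1}\Vcal_B^0[f]$ the Newtonian volume potential of $f$.

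The two facts that drive the argument come from the defining relations \eqnref{revn} together with the hypothesis that $\varepsilon$ is constant on $B$, which reduces them to $\Delta_\Bx N_\varepsilon(\cdot,\By)=-\varepsilon^{-1}\delta_\By$ in $B$ and $\p_\nu N_\varepsilon(\cdot,\By)|_{\p B}=-(\varepsilon|\p B|)^{-1}$. Differentiating under the integral sign yields
\[
\Delta w=-\varepsilon^{-1}f\ \ \text{in }B,\qquad \p_\nu w\big|_{\p B}=-\frac{1}{\varepsilon|\p B|}\int_B f\,d\By .
\]
The second expression vanishes: since $\bJ\in H_0(\mathrm{div};B)$ is compactly supported in $B$ its normal trace on $\p B$ is zero, so $\int_B\nabla\cdot\bJ\,d\By=\int_{\p B}\nu\cdot\bJ\,ds=0$. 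Thus $w$ solves a Poisson equation in $B$ with homogeneous Neumann data.

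Inserting these two facts into Green's representation formula for $w$ on $B$, the Newtonian term $-\int_B\Gamma_0(\cdot-\By)\Delta w(\By)\,d\By$ becomes exactly $v$, and the single-layer term $\Scal_B^0[\p_\nu w|_{\p B}]$ drops out because the Neumann trace vanishes, leaving $w=v-\Dcal_B^0[w|_{\p B}]$ in $B$. Letting $\Bx\to\p B$ from inside—$w$ and $v$ are continuous there, since $w=v+(\text{harmonic})$—and substituting the interior trace relation \eqnref{eq:trace} for the double-layer potential, a short rearrangement produces the boundary identity \eqnref{eq:au01}.

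The routine steps are the differentiation under the integral sign (legitimate since $f\in L^2(B)$ and the volume potential is $H^2$-smoothing, so Green's formula applies to $w\in H^2(B)$) and the identification of the Newtonian term with $v$. The step that needs genuine care—and the main obstacle—is the boundary trace: one must apply \eqnref{eq:trace} on the correct (interior) side and track the resulting $\tfrac{I}{2}$-contribution, since it is precisely this jump, together with the vanishing Neumann trace produced by $\int_B\nabla\cdot\bJ=0$, that converts the $N_\varepsilon$-potential on the left into the bare $\Gamma_0$-potential on the right.
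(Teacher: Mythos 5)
Your route is genuinely different from the paper's. The paper argues by duality: it pairs the left-hand side against an arbitrary $g\in L^2(\p B)$, moves $-\frac{I}{2}+\Kcal_B^0$ onto $g$ as $-\frac{I}{2}+(\Kcal_B^0)^*$, identifies the resulting function $\By\mapsto\int_{\p B}N_\varepsilon(\Bx,\By)\big(-\frac{I}{2}+(\Kcal_B^0)^*\big)[g](\Bx)\,ds_\Bx$ as the solution of an interior Neumann problem, re-represents that solution by a single layer potential, and only then invokes $\int_B\nabla\cdot\bJ=0$ and the arbitrariness of $g$. Your direct approach --- read the left side as the interior trace of $w:=\int_B N_\varepsilon(\cdot,\By)(\nabla\cdot\bJ)(\By)\,d\By$, observe $\Delta w=-\varepsilon^{-1}\nabla\cdot\bJ$ in $B$ and $\p_\nu w|_{\p B}=0$, and collapse $w$ via Green's representation --- is cleaner, and every step you actually carry out is correct.

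The gap sits precisely at the step you defer. From $w=v-\Dcal_B^0[w|_{\p B}]$ in $B$ (with $v:=\varepsilon^{-1}\Vcal_B^0[\nabla\cdot\bJ]$) and the interior trace relation $\Dcal_B^0[\varphi]|_-=(-\frac{I}{2}+\Kcal_B^0)[\varphi]$ from \eqnref{eq:trace}, one gets
\[
w|_{\p B}=v|_{\p B}+\tfrac{1}{2}\,w|_{\p B}-\Kcal_B^0[w|_{\p B}],
\qquad\text{i.e.}\qquad
\Big(\tfrac{I}{2}+\Kcal_B^0\Big)[w|_{\p B}]=v|_{\p B},
\]
which carries $+\frac{I}{2}$, not the $-\frac{I}{2}$ of \eqnref{eq:au01}; the ``short rearrangement'' therefore does not produce the stated identity. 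A concrete test on the unit ball with $\varepsilon=1$ and $\nabla\cdot\bJ=\By_1$ gives $w|_{\p B}=\frac{1}{5}\Bx_1$, $v|_{\p B}=\frac{1}{15}\Bx_1$ and $\Kcal_B^0[\Bx_1]=-\frac{1}{6}\Bx_1$, confirming the $+\frac{I}{2}$ version and refuting the $-\frac{I}{2}$ version under the conventions \eqnref{Gk}, \eqnref{eq:trace}. What your computation actually uncovers is a sign clash internal to the paper: the statement of the lemma is written in the convention of Lemma 2.28 of \cite{HK07:book}, where the fundamental solution is $-1/(4\pi\|\Bx\|)$ and $\Kcal$, $\Scal$ carry the opposite sign, whereas \eqnref{Gk} and \eqnref{eq:trace} fix the other convention. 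A complete proof along your lines must either conclude with $+\frac{I}{2}$ or explicitly flag this convention mismatch; asserting that the rearrangement yields \eqnref{eq:au01} as written is the one claim in your write-up that does not hold.
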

\begin{proof}
For any $g\in L^2(\p B)$, we let $u$ be defined by
$$
u(\By):=\int_{\p B}\Big(-\frac{I}{2} +\Kcal_B^0\Big)[N_{\varepsilon}(\cdot,\By)](\Bx)g(\Bx)d s_\Bx, \quad \By\in B.
$$
Then we conclude
$$
u(\By):=\int_{\p B}N_{\varepsilon}(\Bx,\By)\Big(-\frac{I}{2} +(\Kcal_B^0)^*\Big)[g](\Bx)d s_\Bx, \quad \By\in B.
$$
Noticing that $\varepsilon$ is a constant, we have $\Delta u=0$ and
$$
\nu\cdot\nabla u=\varepsilon^{-1} \Big(-\frac{I}{2} +(\Kcal_B^0)^*\Big)[g], \quad \int_{\p B} u=0, \quad \mbox{on} \quad\p B.
$$
{It is easily seen that $u$ can also be represented by the formula
$$u=\varepsilon^{-1}\Scal_B^{0}[g]-\varepsilon^{-1}\int_{\p B}\Scal_B^{0}[g](\Bx)d s_\Bx.$$
By using that $\int_B\nabla\cdot\bJ=0$ we thus have
\[
\begin{split}
&\int_B\int_{\p B}\Big(-\frac{I}{2} +\Kcal_B^0\Big)[N_{\varepsilon}(\cdot,\By)](\Bx)g(\Bx)d s_\Bx(\nabla\cdot\bJ)(\By)d\By\\
=&\varepsilon^{-1}\int_B\Scal_B^0[g](\By)(\nabla\cdot\bJ)(\By)d\By.
\end{split}
\]}
Finally, by interchanging the order of integrations and using the fact that $g\in L^2(\p B)$ is arbitrary, we readily have \eqnref{eq:au01}.

The proof is complete.
\end{proof}
\begin{rem}
It is remarked that Lemma \ref{le:au01} is adapted and modified from Lemma 2.28 in \cite{HK07:book}. Here for the sake of convenience to the readers and self-containedness of the paper, we present its proof.
\end{rem}

Next we introduce two definitions.

\begin{defn}\label{df:01}
We call a harmonic function $u$ in $B$ a {\it Herglotz harmonic function} with $\xi\in \mathbb{C}^3$ if
 \beq\label{herglotzfn01}
  u(\Bx)=\alpha e^{i\Bx\cdot \xi} +\beta, \quad \Bx\in B,
  \eeq
where $\xi\neq \mathbf{0}$, $\xi\cdot\xi=0$ and $\alpha, \beta \in\mathbb{C}$ are two constants.
\end{defn}

\begin{defn}\label{df:02}
We call a function $\Phi\in H^1(B)$ \emph{admissible}, if one of the following two conditions is fulfilled:
      \begin{enumerate}
        \item[(i)] $\Phi(x)=h(x)$ for $x\in B$, and $h$ is a harmonic function in $\mathbb{R}^3$;
        \item[(ii)] There exists a unit vector $\mathbf{d}\in\mathbb{S}^2$ such that $\mathbf{d}\cdot\nabla\Phi=0$.
      \end{enumerate}
\end{defn}
The following lemma shall also be needed.
\begin{lem}\label{le:intrans01}
Let $(\bE, \bH)$ be the solution to \eqnref{eq:pss} and \eqnref{eq:radia2}. Suppose that ($\epsilon$, $\mu$, $\Gs$) and $\bJ\in H_0(\mathrm{div}; B)$ satisfy \eqnref{eq:outbd1} and $\nabla\cdot \bJ=0$. Suppose further that $\epsilon$ and $\Gs$ are piecewise constants in $B$ which satisfy \eqnref{eq:multilayered} in Assumption \ref{asm:01}. Then there holds the following transmission condition,
\beq\label{eq:addinnertran1}
\nu\cdot (\epsilon^{(j)}+i\sigma^{(j)}/\omega)\bE |_+= \nu\cdot (\epsilon^{(j+1)}+i\sigma^{(j+1)}/\omega)\bE |_- \quad \mbox{on} \quad \p \Sigma_{j},
\eeq
for $j=0, 1, \ldots, N$. {Moreover, there also holds
\beq\label{eq:addinnertran01}
\nu\cdot \mu_0\bH|_+=\nu\cdot \mu\bH|_- \quad \mbox{on} \quad \p B.
\eeq
}
\end{lem}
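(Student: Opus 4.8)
The plan is to recognize that both \eqref{eq:addinnertran1} and \eqref{eq:addinnertran01} are precisely the statements that the normal components of the divergence-free fields $(\epsilon+i\Gs/\omega)\bE$ and $\mu\bH$ are continuous across the respective interfaces. The strategy is therefore in two steps: first extract from the Maxwell system \eqref{eq:pss} that these two fields are globally divergence-free in $\RR^3$ in the distributional sense, and then invoke the standard trace theory for $H(\mbox{div})$ fields, which asserts that a globally divergence-free field has matching normal traces from the two sides of any Lipschitz interface.

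For the electric transmission conditions, I would take the divergence of the second equation in \eqref{eq:pss}. Since $\nabla\cdot(\nabla\times\bH)=0$ identically and $\nabla\cdot\bJ=0$ by hypothesis, this yields $\nabla\cdot\big((\epsilon+i\Gs/\omega)\bE\big)=0$ in $\RR^3$. The key point is that $\epsilon+i\Gs/\omega$ is piecewise constant: by \eqref{eq:multilayered} it equals $\epsilon^{(j)}+i\sigma^{(j)}/\omega$ on the layer $\Sigma_{j-1}\setminus\overline{\Sigma_j}$ (and $\epsilon_0$ on $\RR^3\setminus\overline{B}$, since $\sigma^{(0)}=0$). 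On each such layer the coefficient is a nonzero constant, so $(\epsilon+i\Gs/\omega)\bE$ restricted to the layer lies in $H(\mbox{div})$ and carries a well-defined normal trace on each bounding surface. Testing the global identity against $\phi\in C_c^\infty(\RR^3)$ supported near $\partial\Sigma_j$ and integrating by parts on $\Sigma_j$ and on its complement separately, the two boundary integrals must cancel, which gives $\nu\cdot(\epsilon^{(j)}+i\sigma^{(j)}/\omega)\bE|_+=\nu\cdot(\epsilon^{(j+1)}+i\sigma^{(j+1)}/\omega)\bE|_-$ on $\partial\Sigma_j$, i.e.\ \eqref{eq:addinnertran1}, for each $j=0,1,\ldots,N$.

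The magnetic condition \eqref{eq:addinnertran01} follows along the same lines but more directly. Taking the divergence of the first equation in \eqref{eq:pss} and using $\nabla\cdot(\nabla\times\bE)=0$ gives $\nabla\cdot(\mu\bH)=0$ in $\RR^3$, with no source term to contend with. The only interface at which $\mu$ jumps is $\partial B$, where $\mu=\mu_0$ on the exterior side and $\mu$ takes its interior value on the other side (recall \eqref{eq:outbd1}); the identical normal-trace matching argument then produces $\nu\cdot\mu_0\bH|_+=\nu\cdot\mu\bH|_-$ on $\partial B$.

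The only subtlety, and the step I would treat most carefully, is justifying that the distributional divergence identities genuinely hold across the interfaces with no surface-supported contribution. For the electric part this rests on $\bJ\in H_0(\mbox{div};B)$ together with $\nabla\cdot\bJ=0$, which guarantees $\nabla\cdot\bJ$ carries no singular (surface-delta) part; for the magnetic part the source is absent altogether. Given $\bE,\bH\in H_{loc}(\mbox{curl};\RR^3)$ and the fact that the relevant fields are in $H(\mbox{div})$ on each Lipschitz layer, the Gauss--Green formula applies on each $\Sigma_{j-1}\setminus\overline{\Sigma_j}$ and the normal-trace continuity is rigorous.
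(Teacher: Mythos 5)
Your proposal is correct and follows essentially the same route as the paper: the paper likewise takes the divergence of the second equation of \eqref{eq:pss} (using $\nabla\cdot\bJ=0$) to get $\nabla\cdot\big((\epsilon+i\Gs/\omega)\bE\big)=0$, and then obtains the normal-trace matching by testing against $\nabla\psi$ for $\psi\in C_0^\infty(\RR^3)$ and integrating by parts on $\Sigma_j$ and its complement separately, which is exactly your $H(\mathrm{div})$ trace-continuity argument written out explicitly; the magnetic condition is treated identically in both. No gaps.
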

\begin{proof}
We only prove \eqnref{eq:addinnertran1}, and \eqnref{eq:addinnertran01} can be proved by using similar arguments. By taking the divergence of both sides of the second equation in \eqnref{eq:pss} and noting that $\nabla\cdot \bJ=0$ by the assumption, we have
\beq\label{eq:addinnertran2}
\nabla\cdot (\epsilon+i\sigma/\omega)\bE=\nabla\cdot\bJ=0 \quad \mbox{in} \quad \RR^3.
\eeq
By taking the inner product of both sides of the second equation in \eqnref{eq:pss} with the gradient of a test function $\psi\in C_0^{\infty}(\RR^3)$, and integrating both sides over $\RR^3$, there holds
\beq\label{eq:addinnertran3}
\int_{\RR^3} (\nabla\times \bH) \cdot \nabla\psi=-i\omega\int_{\RR^3} (\epsilon+i\sigma/\omega)\bE\cdot \nabla\psi+ \int_{\RR^3}\bJ\cdot \nabla\psi.
\eeq
By using the vector calculus identity and Green's formula, the LHS of \eqnref{eq:addinnertran3} can be rewritten
\beq\label{eq:addinnertran4}
\int_{\RR^3} (\nabla\times \bH) \cdot \nabla\psi=\int_{\RR^3} \nabla\cdot (\bH\times\nabla\psi)=0.
\eeq
Using \eqnref{eq:addinnertran2} and Green's formula, the RHS of \eqnref{eq:addinnertran3} implies
\beq\label{eq:addinnertran5}
\begin{split}
& -i\omega\int_{\RR^3} (\epsilon+i\sigma/\omega)\bE\cdot \nabla\psi+ \int_{\RR^3}\bJ\cdot \nabla\psi \\
=&-i\omega\int_{\RR^3\setminus\overline{\Sigma}_j}(\epsilon+i\sigma/\omega)\bE\cdot \nabla\psi-i\omega\int_{\Sigma_j} (\epsilon+i\sigma/\omega)\bE\cdot \nabla\psi\\
=&i\omega\int_{\p \Sigma_j}\nu\cdot(\epsilon+i\sigma/\omega)\bE\Big|_+\psi
-i\omega\int_{\p \Sigma_j}\nu\cdot(\epsilon+i\sigma/\omega)\bE\Big|_-\psi.
\end{split}
\eeq
Substituting \eqnref{eq:addinnertran4} and \eqnref{eq:addinnertran5} into \eqnref{eq:addinnertran3}, we have
\beq\label{eq:addinnertran6}
\int_{\p \Sigma_j}\nu\cdot(\epsilon+i\sigma/\omega)\bE\Big|_+\psi=\int_{\p \Sigma_j}\nu\cdot(\epsilon+i\sigma/\omega)\bE\Big|_-\psi.
\eeq
Finally, since $\psi$ is arbitrary, we thus have \eqnref{eq:addinnertran1}, which completes the proof.
\end{proof}

We are ready to present the first main theorem that contains the results on the simultaneous recovery of $\epsilon$, $\mu$, $\sigma$ and $\bJ$ from the knowledge of $\Pi_{\epsilon,\mu, \sigma, \bJ}$.

\begin{thm}\label{th:unique1}
Let $(\epsilon_1, \mu_1, \Gs_1,\bJ_1)$ and $(\epsilon_2, \mu_2, \Gs_2, \bJ_2)$ be two sets of EM configurations. Suppose $\epsilon_j>0$, $\mu_j>0$, $\bJ_j\in H_0(\mathrm{div}; B)$ which verify \eqnref{eq:outbd1}. Suppose further that $\epsilon_j, \mu_j$ and $\sigma_j$ are constants in $B$. Let $(\bE_1, \bH_1)$ and $(\bE_2, \bH_2)$ be the corresponding solutions to \eqnref{eq:pss} and \eqnref{eq:radia2}, associated with $(\epsilon_1, \mu_1, \Gs_1, \bJ_1)$ and $(\epsilon_2, \mu_2, \Gs_2, \bJ_2)$, respectively.
If
\begin{equation}\label{eq:lll3}
\Pi_{\epsilon_1,\mu_1,\Gs_1,\bJ_1}(\Bx, \omega)=\Pi_{\epsilon_2, \mu_2, \Gs_2, \bJ_2}(\Bx, \omega), \quad (\Bx, \omega)\in \partial B\times (0, \omega_0),
\end{equation}
where $\omega_0$ is any given positive constant, then we have the following results:
\begin{enumerate}
  \item[(i)]
{ Set $\Phi:=\nabla\cdot (\bJ_1-\bJ_2)$. If $\nabla \cdot \bJ_j\neq 0$ and
{ \beq\label{eq:nozerocond1}
 \nu\cdot\bE_j|_{\p B}\neq 0, \quad \nu\cdot\bH_j|_{\p B}^++\nu\cdot\bH_j|_{\p B}^{+}\omega^{-1}\neq 0\ \ \mbox{as} \quad \omega\rightarrow +0,
 \eeq}
$j=1, 2$, and $\gamma_1=\gamma_2$, then $\Phi=0$  if $\Phi$ satisfies the {admissibility} condition in Definition \ref{df:02}.
    Furthermore, if $\nabla\times\bJ_1=\nabla\times\bJ_2=0$, then $\bJ_1=\bJ_2$ and $\mu_1=\mu_2$.}\\
  \item[(ii)] If $\nabla \cdot \bJ_j= 0$, and $\Scal_B^0[\nu\cdot\bH_j|_{\p B}^{-}]$ are Herglotz harmonic functions with $\xi_j$, $j=1, 2$ and
{       \beq\label{eq:nozerocond2}
       \nu\cdot\bE_j|_{\p B}^+\omega^{-1}\neq 0\ \ \mbox{as} \quad \omega\rightarrow +0,
       \eeq }
       $j=1, 2$, then we have that $\bJ_1=\bJ_2$, $\mu_1=\mu_2$, $\Gs_1=\Gs_2$ and $\epsilon_1=\epsilon_2$ provided $\Phi:=\xi\cdot(\nabla\times (\bJ_1-\bJ_2))$, where $\xi\cdot\xi_1=\xi\cdot\xi_2=0$, satisfies the {admissibility} condition in Definition \ref{df:02}.
\end{enumerate}
\end{thm}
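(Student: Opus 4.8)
The plan is to convert the coincidence of boundary data into a \emph{non-radiating} source problem for the difference $\bJ_1-\bJ_2$ and then to invert it using the low-frequency expansions of Corollary~\ref{co:01} together with the three auxiliary lemmas. First I would note that the hypothesis \eqref{eq:lll3} means the Cauchy data $(\bE_j,\bH_j)|_{\p B}$ agree for all small $\omega$. Since outside $\overline{B}$ both pairs solve the homogeneous Maxwell system \eqref{eq:pss} (with $\bJ=0$, $\epsilon=\epsilon_0$, $\mu=\mu_0$, $\sigma=0$) and satisfy the Silver--M\"{u}ller condition \eqref{eq:radia2}, the uniqueness of the exterior radiating problem forces $(\bE_1,\bH_1)=(\bE_2,\bH_2)$ in $\RR^3\setminus\overline{B}$. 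Expanding in $\omega$ and matching order by order, I can then compare the coefficient functions, which by Corollary~\ref{co:01} are explicit integral transforms of the sources and the contrasts $\tilde\gamma_j,\tilde\mu_j$.

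For part (i) I would use the representation $\bE_j=\nabla(u_1^{(j)}+u_2^{(j)})+\Ocal(\omega)$ of the first auxiliary lemma, in which the source enters only through $u_2^{(j)}$ via $c_1(\omega)\int_B N_\epsilon(\cdot,\By)(\nabla\cdot\bJ_j)(\By)\,d\By$; here $\gamma_1=\gamma_2$ (hence $\epsilon_1=\epsilon_2$, $\sigma_1=\sigma_2$) guarantees that $N_\epsilon$, $\GL_\epsilon$ and the $u_1$-type contribution are common to both configurations, so that subtraction cancels them and leaves only $u_2^{(1)}-u_2^{(2)}$. Writing $\Phi:=\nabla\cdot(\bJ_1-\bJ_2)$, the matched exterior single-layer densities then give $\int_B N_\epsilon(\cdot,\By)\Phi(\By)\,d\By=0$ on $\p B$, and Lemma~\ref{le:au01} (identity \eqref{eq:au01}) converts this into $\Vcal_B^0[\Phi]\big|_{\p B}=0$; since $\Vcal_B^0[\Phi]$ is harmonic and decaying outside $B$, it vanishes in $\RR^3\setminus\overline{B}$, so both its Cauchy data on $\p B$ vanish, i.e. $\Phi$ is non-radiating. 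It then remains to show that an admissible non-radiating source vanishes: for harmonic $\Phi$ (Definition~\ref{df:02}(i)) I would apply Green's identity to $w:=\Vcal_B^0[\Phi]$, whose full Cauchy data vanish, to obtain $\int_B|\Phi|^2=0$; for a direction-invariant $\Phi$ (Definition~\ref{df:02}(ii)) the compact support of $\nabla\cdot\bJ_j$ in $B$ forces $\Phi\equiv 0$ directly by integrating along the invariant lines. Thus $\nabla\cdot\bJ_1=\nabla\cdot\bJ_2$, and when in addition $\nabla\times\bJ_1=\nabla\times\bJ_2=0$ the Helmholtz decomposition of a compactly supported field gives $\bJ_1=\bJ_2$. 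With $\bJ$ and $\gamma$ matched, I would feed this into the magnetic expansion \eqref{eq:fist02}: the difference $\bH_1-\bH_2$ now isolates $(I_3-D^2\Vcal_B^0\tilde\mu_j)^{-1}$, and the non-vanishing hypothesis \eqref{eq:nozerocond1} lets me divide out the leading coefficient to conclude $\mu_1=\mu_2$.

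For part (ii), $\nabla\cdot\bJ_j=0$, the leading electric term drops out because $D^2\Vcal_B^0[\bJ_j]=0$ by \eqref{eq:opeig02}, so I would work from the magnetic representation \eqref{eq:solH01} and expansion \eqref{eq:highordH01}, in which the source appears as $\Vcal_B^0[\nabla\times\bJ_j]$. The Herglotz hypothesis on $\Scal_B^0[\nu\cdot\bH_j|_{\p B}^-]$ supplies the harmonic exponentials $e^{i\Bx\cdot\xi}$ with $\xi\cdot\xi=0$ as test functions; pairing the matched magnetic data against them and integrating by parts converts the identity for $\bH_1-\bH_2$ into moment conditions on $\Phi:=\xi\cdot(\nabla\times(\bJ_1-\bJ_2))$ with $\xi\cdot\xi_1=\xi\cdot\xi_2=0$. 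As in part (i), admissibility of $\Phi$ furnishes the rigidity---either through the Green's-identity argument (harmonic case) or through reduction to a lower-dimensional, compactly supported problem (invariant case)---needed to force $\Phi=0$; letting $\xi$ range over the admissible complex directions then yields $\nabla\times(\bJ_1-\bJ_2)=0$, and together with $\nabla\cdot\bJ_j=0$ and compact support this gives $\bJ_1=\bJ_2$. I would then recover the parameters in sequence: the matched $\bH$ data first fixes $\mu_1=\mu_2$ (using \eqref{eq:nozerocond2} to keep the relevant coefficient nonzero), after which the transmission relation \eqref{eq:addinnertran1} of Lemma~\ref{le:intrans01}, applied layer by layer across each interface $\p\Sigma_j$, recovers $\epsilon^{(j)}+i\sigma^{(j)}/\omega$, and separating real and imaginary parts in $\omega$ gives $\epsilon_1=\epsilon_2$ and $\sigma_1=\sigma_2$.

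The main obstacle throughout is the inversion of these coupled integral identities: the source difference and the material contrasts are entangled in the expansion coefficients, and decoupling them relies essentially on both the genericity hypotheses \eqref{eq:nozerocond1}--\eqref{eq:nozerocond2} and the rigidity of the admissibility condition, which is precisely what excludes the otherwise generically present non-radiating sources. The delicate points are verifying that the admissibility class survives the passage through the layer potentials, and, in part (ii), checking that the layer-by-layer transmission argument terminates with every constant $\epsilon^{(j)},\sigma^{(j)}$ (and $\mu$) uniquely pinned down.
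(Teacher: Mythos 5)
Your overall strategy for part (i) tracks the paper's: the Neumann-function representation, Lemma \ref{le:au01}, the vanishing of $\Vcal_B^0[\Phi]$ on $\p B$ and hence outside $B$, and the rigidity supplied by admissibility. (Your shortcut for the direction-invariant case --- that a compactly supported $\Phi$ with $\mathbf{d}\cdot\nabla\Phi=0$ vanishes identically --- is legitimate and more elementary than the paper's moment/Fourier argument; your Green's-identity treatment of the harmonic case is equivalent to the paper's spherical-harmonics expansion. Your recovery of $\mu$ is only sketched but matches the paper's route via the trace formula and the invertibility of $\tfrac12 I-(\Kcal_B^0)^*$.)

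Part (ii), however, has two genuine gaps. First, after obtaining $\xi\cdot(\nabla\times(\bJ_1-\bJ_2))=0$, you claim that ``letting $\xi$ range over the admissible complex directions'' yields $\nabla\times(\bJ_1-\bJ_2)=0$. But $\xi$ is constrained by $\xi\cdot\xi_1=\xi\cdot\xi_2=0$ with $\xi_1,\xi_2$ fixed by the Herglotz hypothesis, so it varies only in a one- (or at best two-) dimensional complex subspace; you learn only that one component of the curl vanishes. The paper closes this gap with a structural argument: it writes $\nabla\times(\bJ_1-\bJ_2)=\xi' g$ with $\xi'$ in the annihilator $\mathbb{M}(\xi)$, uses $\nabla\cdot\nabla\times=0$ to force $g$ to be affine, $g=\xi''\cdot\Bx$, and then invokes the vanishing of $\bJ_j$ on $\p B$ to kill it. Some argument of this kind is indispensable and is absent from your proposal. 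Second, your recovery of $\epsilon$ and $\sigma$ by ``separating real and imaginary parts in $\omega$'' of the transmission relation does not work as stated: since $\bE_j=\Ocal(\omega)$, the quantity $\nu\cdot(\epsilon_j+i\sigma_j/\omega)\bE_j$ carries the $\sigma$-information at order $\Ocal(1)$ and the $\epsilon$-information only at order $\Ocal(\omega)$, so one must first prove $\nu\cdot(\bE_1-\bE_2)|_{\p B}^-=\Ocal(\omega^2)$ (the paper does this via the auxiliary fields $\bF_j$, integration by parts, and the invertibility of $I/2-(\Kcal_B^0)^*$) to extract $\Gs_1=\Gs_2$, and then bootstrap further to $\bE_1-\bE_2=\Ocal(\omega^3)$ (via a harmonic-potential argument using the matched tangential traces) before $\epsilon_1=\epsilon_2$ can be read off. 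This iterated sharpening of the asymptotics is the substance of the argument and cannot be replaced by a one-shot separation of real and imaginary parts. A minor point: the Herglotz condition is not used by pairing against $e^{i\Bx\cdot\xi}$ as test functions; it is the assumed \emph{form} of $\Scal_B^0[\nu\cdot\bH_j|_{\p B}^-]$, and the inner product with $\xi$ is taken precisely to annihilate those explicit terms in the boundary identity.
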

\begin{proof}
We first prove case~(i). Since $\epsilon_j$, $j=1, 2$ are constants in $B$, and $\nabla \cdot \bJ_j\neq 0$, $j=1, 2$, by using \eqnref{eq:lll3}, or $\nu\cdot\epsilon_1\bE_1=\nu\cdot\epsilon_2\bE_2$ (see \eqnref{eq:addinnertran1} and note that $\gamma_1=\gamma_2$) on $\p B$, the first line in \eqnref{eq:sol01}, \eqnref{eq:sol02}, \eqnref{revn} and $\epsilon_1=\epsilon_2$, we have
  $$
  \int_B N_{\epsilon_1}(\Bx,\By)(\nabla\cdot (\bJ_1-\bJ_2))(\By)d\By=0, \quad \Bx\in \p B.
  $$
  Using \eqnref{eq:au01} we obtain
  \beq\label{eq:uniqueres01}
  \int_B \Gamma_0(\Bx,\By)(\nabla\cdot (\bJ_1-\bJ_2))(\By)d\By=0, \quad \Bx\in \p B.
  \eeq
  Note that $\Gamma_0(\Bx,\By)$ is a harmonic function which decays at infinity (with order $\|\Bx\|^{-1}$) for $\By\in B$ and $\Bx\in \RR^3\setminus B$. We easily see 
  \beq\label{eq:uniqueres001}
  \int_B \Gamma_0(\Bx,\By)(\nabla\cdot (\bJ_1-\bJ_2))(\By)d\By=0, \quad \Bx\in \RR^3\setminus B.
  \eeq
  Recall the following addition formula for $\|\Bx\|>\|\By\|$ (see \cite{CK,Ned})
  \beq\label{eq:uniqueres02}
  \frac{1}{4\pi\|\Bx-\By\|}=\sum_{n=0}^\infty \sum_{m=-n}^{n} \frac{1}{2n+1}Y_n^m(\Bx/\|\Bx\|) \overline{Y_n^m(\By/\|\By\|)} \,\frac{\|\By\|^n}{\|\Bx\|^{n+1}},
  \eeq
  where $Y_n^m$ denotes the spherical harmonic function of degree $n$ and order $m$. By choosing $\Bx\in \p\mathcal{B}_R$, where $\mathcal{B}_R$ is a sufficiently large ball such that $B\Subset \mathcal{B}_R$, and inserting \eqnref{eq:uniqueres02} into \eqnref{eq:uniqueres001} and using the orthogonality of $Y_n^m$, we immediately conclude that
  \beq\label{eq:uniqueres03}
  \int_B \|\By\|^nY_n^m(\By/\|\By\|)(\nabla\cdot (\bJ_1-\bJ_2))(\By)d\By=0, \quad \Bx\in \p \mathcal{B}_R.
  \eeq
  Note that $\|\By\|^nY_n^m(\By/\|\By\|)$, $m=-n,\cdots,n$, $n=0, 1, 2, \cdots$, yield all the homogeneous harmonic polynomials. Hence \eqnref{eq:uniqueres03} and the first case (i) in Definition \ref{df:02} imply $\nabla
  \cdot (\bJ_1-\bJ_2)=0$; see also \cite{LiuUhl15} for the relevant argument for a similar case in thermo- and photo-acoustic tomography. For the second case (ii) in Definition \ref{df:02}, since $\epsilon$, $\mu$ and $\sigma$ are constants in $B$, due to the rotational invariance of the Maxwell system (see \cite{BLZ}), without loss of generality, we can assume that $\mathbf{d}=(0, 0, 1)$ in the case (ii), which gives
  $$
  \p_{\Bx_3}\Phi(\Bx)=0\quad\mbox{where}\ \ \Bx=(\Bx_j)_{j=1}^3\in\mathbb{R}^3.
  $$
That is, $\Phi$ is independent of $\Bx_3$. Following a similar argument to the proof of Theorem 2.2 in \cite{LiuUhl15} by using Fourier analysis techniques, we can show that \eqnref{eq:uniqueres03} and the second case (ii) in Definition \ref{df:02} also imply that $\nabla\cdot (\bJ_1-\bJ_2)=0$.
 By using \eqnref{eq:solE01} one can easily see that $\bE_1-\bE_2=\Ocal(\omega)$ in $B$.

  If $\bJ_j\in H(\mbox{div}; B)$, $j=1,2$, are curl-free functions, then by using the Helmholtz decomposition we have
  $$
  \bJ_j= \nabla{u_j}, \quad j=1, 2,
  $$
  which together with $\nabla\cdot (\bJ_1-\bJ_2)=0$ and $\bJ_1=\bJ_2=0$ on $\p B$, readily implies that $\bJ_1=\bJ_2$ in $B$.
  Next, by using integration by parts, we can obtain
  $$\nabla\times\Vcal_B^0[\bJ_j]=\Vcal_B^0[\nabla\times\bJ_j]=0, \quad j=1, 2,$$
  which shows that the second term in \eqnref{eq:fist02} vanishes.
   By \eqnref{eq:fist02} we thus have
$$
\bH_j=(I_3-D^2\Vcal_B^0\tilde\mu_j)^{-1}\nabla\times\Vcal_B^0[-i(\epsilon_j-\epsilon_0)\omega\bE_j+\Gs_j\bE_j+\bJ_j]+\Ocal(\omega^2), \quad j=1, 2,
$$
and hence
\beq\label{eq:transf03}
(I_3-D^2\Vcal_B^0\tilde\mu_j)[\bH_j]=\nabla\times\Vcal_B^0[-i(\epsilon_j-\epsilon_0)\omega\bE_j+\Gs_j\bE_j+\bJ_j]+\Ocal(\omega^2), \quad j=1,2 .
\eeq
  Noting that $\mu_j$, $j=1, 2$, are constants in $B$, and by taking divergence of both sides of \eqnref{eq:transf03}, one can show that
  there holds
  $$
   \nabla\cdot \bH_j=\Ocal(\omega^2), \quad j=1, 2.$$
By \eqnref{eq:lll3}, or $\nu\cdot(\mu_1\bH_1-\mu_2\bH_2)=0$ on $\p B$ (cf. \eqnref{eq:addinnertran01}) and \eqnref{eq:transf03}, we have from integration by parts that
  \beq\label{eq:forref01}
  (\bH_1-\bH_2)|_{\p B}^{-}-\nabla\Scal_B^0[\nu\cdot(\bH_1-\bH_2)|_{\p B}^{-}]=\Ocal(\omega^2).
  \eeq
 Using the trace formula \eqnref{eq:trace} one thus has
  $$
  (\mu_1^{-1}-\mu_2^{-1})\left(\frac{1}{2}I-(\Kcal_B^0)^*\right)[\nu\cdot\bH_1|_{\p B}^{+}]=\Ocal(\omega^2).
  $$
  Since $1/2 I-(\Kcal_B^0)^*$ is invertible on $L^2(\p B)$ we finally obtain $\mu_1=\mu_2$.\\

 For case (ii), if $\nabla\cdot \bJ_j=0$, $j=1, 2$, then by using \eqnref{eq:tmp05} and noting that $\mu_j$, $j=1, 2$, are constants one has $\nabla\cdot \bH_j=\Ocal(\omega^2)$, and then by using \eqnref{eq:solH01} and integration by parts one further obtains
   \beq\label{eq:consttmp01}
   \bH_j=-\tilde\mu_j\nabla\Scal_B^0[(\nu\cdot\bH_j)|_{\p B}^{-}]+\Vcal_B^0[\nabla\times \bJ_j]+\nabla\times\Vcal_B^0[\Gs_j\bE_j]+\Ocal(\omega^2) \quad \mbox{in} \ B,
   \eeq
   where $j=1, 2$.
 Hence by using \eqnref{eq:lll3} and \eqnref{herglotzfn01} one obtains
  $$-i(\tilde\mu_1\xi_1 e^{i\Bx\cdot \xi_1}-\tilde\mu_2\xi_2 e^{i\Bx\cdot \xi_2})+ \Vcal_B^0[\nabla\times(\bJ_1-\bJ_2)]=0 \quad  \mbox{on} \ \p B.$$
  Taking the inner product with $\xi$ on both sides of the above equation (noting that $\xi\cdot\xi_j=0$, $j=1, 2$), one then has
  $$\Vcal_B^0[\xi\cdot(\nabla\times(\bJ_1-\bJ_2))]=0 \quad  \mbox{on} \ \p B.$$
 Thus by using the fact that $\xi\cdot(\nabla\times (\bJ_1-\bJ_2))$ satisfies the {admissibility condition} in Definition \ref{df:02} and similar analysis as that for \eqnref{eq:uniqueres01}, one can show that
  $$
  \xi\cdot(\nabla\times (\bJ_1-\bJ_2))=0 \quad \mbox{in} \ B.
  $$
  Therefore $\nabla\times(\bJ_1-\bJ_2)$ can be denoted by $\xi'g$, where $g\in L^2(B)^3$ and $\xi'\in \mathbb{C}^3$ satisfies
  $$
  \left(
  \begin{array}{l}
  \Re{\xi'}\\
  \Im{\xi'}
  \end{array}
  \right)\in \mathbb{M}(\xi),
  $$
  with $\mathbb{M}(\xi)\subset\RR^6$ given by
  $$\mathbb{M}(\xi):=\mbox{span}\left\{
  \left(
  \begin{array}{l}
  \Re{\xi}\\
  \Im{\xi}
  \end{array}
  \right),
  \left(
  \begin{array}{l}
  \Im{\xi}\\
  -\Re{\xi}
  \end{array}
  \right),
  \left(
  \begin{array}{c}
  \Re{\xi}\times\Im{\xi}\\
  0
  \end{array}
  \right),
    \left(
  \begin{array}{c}
  0\\
  \Re{\xi}\times\Im{\xi}
  \end{array}
  \right)
  \right\}.
  $$
  Moreover, $\xi'g$ satisfies $\nabla g\cdot\xi'=0$ and thus $g(\Bx)=\xi''\cdot\Bx$, $\xi''\in \mathbb{M}(\xi')$. To sum up, we have
  $$
  \nabla\times(\bJ_1-\bJ_2)=\xi'(\xi''\cdot \Bx), \quad \xi'\in \mathbb{M}(\xi), \quad \xi''\in \mathbb{M}(\xi').
  $$
  Noting the fact that $\bJ_j=0$ on $\p B$, $j=1, 2$, one immediately has $\xi'(\xi''\cdot \Bx)=0$ and hence $\nabla\times(\bJ_1-\bJ_2)=0$, which together with the assumption that $\nabla\cdot \bJ_j=0$, $j=1, 2$, readily implies $\bJ_1=\bJ_2$. By using \eqnref{eq:consttmp01} again, one can readily have that $\mu_1=\mu_2$ and thus $\bH_1-\bH_2=\Ocal(\omega)$ in $B$.
  Define $\bF_j$ in $B$ by
  \beq\label{eq:newadd1}
  \bF_j:=\nabla\times\Vcal_B^0[\tilde\mu_j\bH_j]+\Vcal_B^0[\bJ_j], \quad j=1, 2,
  \eeq
  then \eqnref{eq:highordE01} can be reformulated as
  \beq\label{eq:newadd2}
  (I_3-D^2\Vcal_B^0\tilde\gamma_j)[\bE_j]=i\omega\mu_0\bF_j+\Ocal(\omega^2), \quad j=1, 2.
  \eeq
  By integration by parts we then have that (noting that $\tilde\gamma_j$, $j=1, 2$ are constants in $B$)
  $$
  \bE_j+\nabla\Scal_B^0[\nu\cdot\tilde\gamma_j\bE_j|_{\p B}^{-}]= i\omega\mu_0\bF_j + \Ocal(\omega^2), \quad j=1, 2
  $$
  holds in $B$. Since we have already proved $\bF_1-\bF_2=\Ocal(\omega)$, by taking the trace of the above equation and using \eqnref{eq:lll3}, or $\nu\cdot(1+\tilde\gamma_1)\bE_1=\nu\cdot(1+\tilde\gamma_2)\bE_2$ on $\p B$ (cf. \eqnref{eq:addinnertran1}), we have
  \beq\label{eq:forref02}
  \left(\frac{I}{2}-(\Kcal_B^0)^*\right)[\nu\cdot\bE_1|_{\p B}^{-}-\nu\cdot\bE_2|_{\p B}^{-}]=\Ocal(\omega^2),
  \eeq
By virtue of the invertibility of $I/2-(\Kcal_B^0)^*$ on $L^2(\p B)$ we immediately have from \eqnref{eq:forref02} that
  $$\nu\cdot\bE_1|_{\p B}^{-}-\nu\cdot\bE_2|_{\p B}^{-}=\Ocal(\omega^2).$$
Then using \eqnref{eq:addinnertran1} on $\p B$ and the assumption that $\nu\cdot \bE_j/\omega\neq 0$ as $\omega\rightarrow +0$, we have
  $\Gs_1=\Gs_2$.
  By using \eqnref{eq:consttmp01} again, one further has
  \beq\label{eq:revisereconep03}
  \bH_1-\bH_2=\Ocal(\omega^2),
  \eeq
  and thus there holds
  \beq\label{eq:revisereconep04}
  \nabla\times(\bE_1-\bE_2)=\Ocal(\omega^3).
  \eeq
  Since $\nabla\cdot\bJ=0$, we have $\nabla\cdot(1+\tilde\gamma_j)\bE_j=0$ (note that $1+\tilde\gamma_j$ is constant in $B$) and hence $\nabla\cdot(\bE_1-\bE_2)=0$. Then using the transmission condition $\nu\times\bE_j|_+=\nu\times\bE_j|_-$, $j=1, 2$, which actually implies $\nu\times(\bE_1-\bE_2)|_-=0$ on $\p B$, there exists $u\in H^1(B)$ such that $\bE_1-\bE_2=\nabla u+\Ocal(\omega^3)$ and
  $$\Delta u=0, \quad \nu\times\nabla u=0 \quad \mbox{on} \quad \p B.$$
 Hence there holds $u=C(\omega)$ and thus $\bE_1-\bE_2=\Ocal(\omega^3)$.
 Finally by using \eqnref{eq:addinnertran1} again one has $\epsilon_1=\epsilon_2$.

The proof is complete.
\end{proof}

Next we present the second main theorem on the simultaneous recovery when the medium parameters are piecewise constants (cf. Assumption~\ref{asm:01}). We first derive the following lemma.

\begin{lem}\label{le:lead01}
Suppose $\mu\neq \mu_0$ is a constant in $B$ and $\Gs$, $\epsilon$ are piecewise constants in $B$ which satisfy \eqnref{eq:multilayered} in Assumption \ref{asm:01} with $\Gs\neq 0$. Suppose further that $\nabla\cdot\bJ=0$. Let $\bE$ be defined in \eqnref{eq:highordE01}, and
\beq\label{eq:leadingodH01}
\bH^{(0)}:=(I_3-D^2\Vcal_B^0\tilde\mu)^{-1}\nabla\times\Vcal_B^0[\bJ].
\eeq
Then
\beq\label{eq:leadd0101}
\bE=(\nabla u+\nabla\times\Psi)\omega+\Ocal(\omega^2) \quad \mbox{in} \quad B,
\eeq
where $\Psi$ satisfies $\nabla\times\nabla\times\Psi=i\mu\bH^{(0)}$ and $u$ is the solution to
\beq\label{eq:leadd010101}
\left\{
\begin{array}{ll}
\nabla\cdot\Gs u=0 &\mbox{in} \quad B, \\
\nu\cdot\Gs\nabla u= -i\epsilon_0\nu\cdot\bE|_+-\nu\cdot (\Gs\nabla \times \Psi) &\mbox{on} \quad \p B.
\end{array}
\right.
\eeq
\end{lem}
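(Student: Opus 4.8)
The plan is to read off the leading-order curl and divergence of $\bE$ directly from the formula \eqnref{eq:highordE01} and then to reconstruct $\bE$ through a Helmholtz-type splitting whose two pieces solve exactly the two stated problems. First I would record, for $\Bx\in B$, the operator identities $\nabla\times D^2\Vcal_B^0=0$ (since $D^2=\nabla\nabla\cdot$) and $\nabla\cdot D^2\Vcal_B^0[\Phi]=\Delta\Vcal_B^0[\nabla\cdot\Phi]=-\nabla\cdot\Phi$ (because $\Gamma_0$ is the fundamental solution of $-\Delta$), together with $\nabla\times\nabla\times\Vcal_B^0[\Phi]=D^2\Vcal_B^0[\Phi]+\Phi$. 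Abbreviating the bracket in \eqnref{eq:highordE01} by $\bF:=\nabla\times\Vcal_B^0[\tilde\mu\bH^{(0)}]+\Vcal_B^0[\bJ]$ and using the definition \eqnref{eq:leadingodH01} of $\bH^{(0)}$, the expansion \eqnref{eq:highordE01} reads $(I_3-D^2\Vcal_B^0\tilde\gamma)[\bE]=i\omega\mu_0\bF+\Ocal(\omega^2)$. Applying the three identities then yields, in $B$,
\[
\nabla\times\bE=i\omega\mu\bH^{(0)}+\Ocal(\omega^2),\qquad \nabla\cdot\big((\epsilon+i\Gs/\omega)\bE\big)=\Ocal(\omega^2),
\]
where the first uses $\nabla\times\bF=(1+\tilde\mu)\bH^{(0)}$ (obtained by eliminating $D^2\Vcal_B^0[\tilde\mu\bH^{(0)}]$ via \eqnref{eq:leadingodH01}) and the second uses $\nabla\cdot\bF=\Vcal_B^0[\nabla\cdot\bJ]=0$.

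Since \eqnref{eq:highordE01} shows $\bE=\Ocal(\omega)$, I would set $\bE^{(1)}:=\bE/\omega$, so that the two relations above read $\nabla\times\bE^{(1)}=i\mu\bH^{(0)}+\Ocal(\omega)$ and, extracting the $\omega^0$-term of $\nabla\cdot((\omega\epsilon+i\Gs)\bE^{(1)})=\Ocal(\omega^2)$, $\nabla\cdot(\Gs\bE^{(1)})=\Ocal(\omega)$. I would next decompose $\bE^{(1)}=\nabla u+\nabla\times\Psi$: define $\Psi$ as a solution of $\nabla\times\nabla\times\Psi=i\mu\bH^{(0)}$, which is consistent because $\nabla\cdot(\mu\bH^{(0)})=0$ by the leading order of \eqnref{eq:tmp05}; then $\nabla\times(\bE^{(1)}-\nabla\times\Psi)=\Ocal(\omega)$ guarantees a scalar potential $u$ with $\bE^{(1)}=\nabla u+\nabla\times\Psi+\Ocal(\omega)$. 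Feeding this into $\nabla\cdot(\Gs\bE^{(1)})=\Ocal(\omega)$ gives $\nabla\cdot(\Gs(\nabla u+\nabla\times\Psi))=0$ in $B$; inside each homogeneous layer $\Gs$ is constant and $\nabla\cdot\nabla\times\Psi=0$, so this reduces to $\nabla\cdot(\Gs\nabla u)=0$, the interior equation in \eqnref{eq:leadd010101}.

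It remains to produce the boundary condition, and here I would invoke the transmission identity \eqnref{eq:addinnertran1} at $j=0$, i.e. on $\p B$, where $\epsilon^{(0)}=\epsilon_0$ and $\sigma^{(0)}=0$: $\epsilon_0\,\nu\cdot\bE|_+=\nu\cdot(\epsilon^{(1)}+i\sigma^{(1)}/\omega)\bE|_-$. Substituting $\bE|_-=\omega(\nabla u+\nabla\times\Psi)|_-+\Ocal(\omega^2)$, the factor $i\sigma^{(1)}/\omega$ converts the $\omega$-prefactor into an $\Ocal(1)$ contribution while the $\epsilon^{(1)}$-term drops to $\Ocal(\omega)$, so that $\epsilon_0\,\nu\cdot\bE|_+=i\sigma^{(1)}\,\nu\cdot(\nabla u+\nabla\times\Psi)|_-+\Ocal(\omega)$. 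Dividing by $i$ and rearranging yields precisely $\nu\cdot\Gs\nabla u=-i\epsilon_0\,\nu\cdot\bE|_+-\nu\cdot(\Gs\nabla\times\Psi)$ on $\p B$, which is the boundary condition in \eqnref{eq:leadd010101}. Collecting the pieces, $\bE=\omega\bE^{(1)}+\Ocal(\omega^2)=(\nabla u+\nabla\times\Psi)\omega+\Ocal(\omega^2)$, as claimed.

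The step I expect to be the genuine obstacle is the singular dependence $\tilde\gamma=(\epsilon-\epsilon_0)/\epsilon_0+i\Gs/(\omega\epsilon_0)\sim i\Gs/(\omega\epsilon_0)$ as $\omega\to+0$, which is forced by $\Gs\neq0$. Because of it the operator $(I_3-D^2\Vcal_B^0\tilde\gamma)^{-1}$ has no naive regular expansion in $\omega$, so the bookkeeping that lets the $i\Gs/\omega$ factor turn an $\Ocal(\omega)$ field into $\Ocal(1)$ normal data — and, crucially, the assertion that the discarded terms are uniformly $\Ocal(\omega^2)$ — must be justified through the singular-coefficient solvability already established in Lemma~\ref{le:01}(ii) (via the Neumann function $N_\epsilon$, the single layer $\Scal_B^0$ and the Neumann-to-Dirichlet map $\GL_\epsilon$) rather than by formal power series. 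A secondary point to check is the solvability and gauge of $\Psi$, which rests on $\nabla\cdot(\mu\bH^{(0)})=0$ and on the fact that the residual interface jumps of $\nabla\times\Psi$ are absorbed consistently into the transmission conditions inherited by $u$ across the layers $\p\Sigma_j$.
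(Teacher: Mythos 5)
Your argument follows the paper's proof essentially step for step: both extract $\nabla\times\bE = i\omega\mu\bH^{(0)}+\Ocal(\omega^2)$ and the leading-order divergence relation from \eqref{eq:highordE01}, use the transmission condition \eqref{eq:addinnertran1} on $\p B$ to supply the normal boundary datum, and characterize $\bE^{(1)}=\bE/\omega$ as the solution of the resulting div--curl system, split as $\nabla u+\nabla\times\Psi$. The only point where the paper is more careful is in deducing $\bE=\Ocal(\omega)$ --- it obtains this from the div--curl system with $\Ocal(\omega)$ interior and boundary data rather than by reading it off \eqref{eq:highordE01} directly (which is not legitimate, since $(I_3-D^2\Vcal_B^0\tilde\gamma)^{-1}$ depends singularly on $\omega$) --- but you explicitly flag this as the delicate step and point to the solvability theory of Lemma~\ref{le:01} as its resolution, which is exactly what the paper invokes.
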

\begin{proof}
Define
$$\Phi:=\nabla\times\Vcal_B^0\tilde\mu(I_3-D^2\Vcal_B^0\tilde\mu)^{-1}\nabla\times\Vcal_B^0[\bJ]\quad \mbox{in}\quad B,$$
then one has
\beq\label{eq:leadd0102}
\begin{split}
\nabla\times\Phi=&-\Delta\Vcal_B^0\tilde\mu(I_3-D^2\Vcal_B^0\tilde\mu)^{-1}\nabla\times\Vcal_B^0[\bJ]\\
&+ D^2\Vcal_B^0\tilde\mu(I_3-D^2\Vcal_B^0\tilde\mu)^{-1}\nabla\times\Vcal_B^0[\bJ]\\
=&\mu\mu_0^{-1}(I_3-D^2\Vcal_B^0\tilde\mu)^{-1}\nabla\times\Vcal_B^0[\bJ]-\nabla\times\Vcal_B^0[\bJ].
\end{split}
\eeq
On the other hand, one has from \eqnref{eq:highordE01} that
\beq\label{eq:leadd0103}
(I_3-D^2\Vcal_B^0\tilde\gamma)[\bE]=i\omega\mu_0(\Phi+\Vcal_B^0[\bJ])+\Ocal(\omega^2) \quad \mbox{in} \quad B.
\eeq
\eqnref{eq:leadd0102} and \eqref{eq:leadd0103} readily imply that
\beq\label{eq:leadd0104}
\nabla\times\bE=i\omega\mu(I_3-D^2\Vcal_B^0\tilde\mu)^{-1}\nabla\times\Vcal_B^0[\bJ]+\Ocal(\omega^2) \quad \mbox{in} \quad B,
\eeq
which in combination with the fact $\nabla\cdot\bJ=0$ further implies that
\beq\label{eq:leadd0105}
\nabla\cdot\gamma\bE=\Ocal(\omega^2).
\eeq
By \eqnref{eq:leadd0104} and \eqnref{eq:leadd0105}, one has
\begin{equation}\label{eq:ff1}
\nabla\times\bE=\Ocal(\omega), \quad \nabla\cdot\Gs\bE=\Ocal(\omega) \quad \mbox{in} \quad \RR^3.
\end{equation}
Then using \eqref{eq:ff1} and the transmission condition \eqnref{eq:addinnertran1} on $\p B$, namely
$$
i\nu\cdot\Gs\bE|_-=\omega\nu\cdot\bE|_+-\omega\nu\cdot\bE|_-=\Ocal(\omega) \quad \mbox{on} \quad \p B,
$$
one can easily show that $\bE=\Ocal(\omega)$ in $B$. Define
\beq\label{eq:leadd0106}
\bE=\omega\bE^{(1)}+\Ocal(\omega^2).
\eeq
By using \eqnref{eq:leadd0104}, \eqnref{eq:leadd0105} and the transmission condition \eqnref{eq:addinnertran1} again, it is readily seen that $\bE^{(1)}$ satisfies
\beq\label{eq:leadd0107}
\left\{
\begin{array}{ll}
\nabla\times\bE^{(1)}=i\mu \bH^{(0)} & \mbox{in} \quad B,\\
 \nabla\cdot\Gs\bE^{(1)}=0 &\mbox{in} \quad B\\
 \nu\cdot\Gs\bE^{(1)}=-i\epsilon_0\nu\cdot\bE|_+ &\mbox{on} \quad \p B.
\end{array}
\right.
\eeq
From the proof of Lemma \ref{le:01} one can see that \eqnref{eq:leadd0107} has a unique solution, and straightforward verifications show that $\bE^{(1)}=\nabla u +\nabla\times\Psi$ is exactly the solution to \eqnref{eq:leadd0107}.

The proof is complete.
\end{proof}

\begin{defn}\label{df:03}
Suppose $\mu\neq \mu_0$ is a constant in $B$ and $\Gs$, $\epsilon$ are piecewise constants in $B$ which satisfy \eqnref{eq:multilayered} in Assumption \ref{asm:01} with $N=2$ and $\Gs\neq 0$. Let $(\bE, \bH)$ be the corresponding solutions to \eqnref{eq:pss} and \eqnref{eq:radia2}, associated with $(\epsilon, \mu, \Gs, \bJ)$.
Let $\bE^{(1)}$ be defined in \eqnref{eq:leadd0107} associated with $(\epsilon, \mu, \Gs, \bJ)$. We call $(\bJ, \mu, \Gs)$ an admissible two-layer structure, if there exists $l_1, l_2\in L_0^2(\p \Sigma_1)$, such that
\beq\label{eq:dfadd0101}
\begin{split}
&\int_{\p \Sigma_1} l_1 \nu\cdot\bE^{(1)}|_+=C_1\int_{\p B} g_{l_1}\nu\cdot\bE^{(1)}|_-,\\
&\int_{\p \Sigma_1} l_2 \nu\cdot\bE^{(1)}|_+=C_2\int_{\p B} g_{l_2}\nu\cdot\bE^{(1)}|_-,
\end{split}
\eeq
where $C_1\neq C_2$ are two constants and $g_{l_j}=u_j|_{\p B}$, with $u_j$, $j=1, 2$ solutions to
\beq\label{eq:dfadd0102}
\left\{
\begin{split}
&\Delta u_j=0\quad \mbox{in}\quad B, \\
&u_j=-\Scal_{B}^0(\Scal_B^0)^{-1}\left[\Scal_{\Sigma_1}^0(\Scal_{\Sigma_1}^0)^{-1}(\Gs_1^{(1)})^{-1}\Gs_1^{(2)}
\Big(\frac{I}{2}+\Kcal_{\Sigma_1}^0\Big)[l_j]-\mathcal{D}_{\Sigma_1}^0[l_j]\right]\\
&\quad\quad+(\Gs_1^{(1)})^{-1}\Gs_1^{(2)}\Big(\frac{I}{2}+\Kcal_{\Sigma_1}^0\Big)[l_j]+\Big(\frac{I}{2}-\Kcal_B^0\Big)[l_j] \quad \mbox{on} \quad \p \Sigma_1.
\end{split}
\right.
\eeq
\end{defn}

\begin{thm}\label{th:unique2}
Let $(\epsilon_1, \mu_1, \Gs_1,\bJ_1)$ and $(\epsilon_2, \mu_2, \Gs_2, \bJ_2)$ be two sets of EM configurations. Suppose that ($\epsilon_j$, $\mu_j$, $\Gs_j$) and $\bJ_j\in H_0(\mathrm{div}; B)$ verify \eqnref{eq:outbd1}, $j=1,2$. Suppose further that $\mu_1=\mu_2=\mu'$  and $\nabla\cdot\bJ_1=\nabla\cdot\bJ_2=0$ in $B$, where $\mu'$ is a positive constant, and $\Gs_j$, $\epsilon_j$ are piecewise constants in $B$ which satisfy \eqnref{eq:multilayered} in Assumption \ref{asm:01} with $\Gs_1=\Gs_2\neq 0$. Let $(\bE_1, \bH_1)$ and $(\bE_2, \bH_2)$ be the corresponding solutions to \eqnref{eq:pss} and \eqnref{eq:radia2}, associated with $(\epsilon_1, \mu_1, \Gs_1, \bJ_1)$ and $(\epsilon_2, \mu_2, \Gs_2, \bJ_2)$, respectively.
Suppose that
\begin{equation}\label{eq:th2c2}
\Pi_{\epsilon_1,\mu_1,\Gs_1,\bJ_1}(\Bx, \omega)=\Pi_{\epsilon_2, \mu_2, \Gs_2, \bJ_2}(\Bx, \omega), \quad (\Bx, \omega)\in \partial B\times (0, \omega_0),
\end{equation}
where $\omega_0$ is any given positive constant.
Then we have $\bJ_1=\bJ_2$, provided there exists $\xi\in \mathbb{S}^2$ such that
$\Phi:=\xi\cdot(\nabla\times (\bJ_1-\bJ_2))$ satisfies the {admissibility} condition in Definition \ref{df:02}.
Furthermore, suppose $N=2$ in \eqnref{eq:multilayered}, let $\bE^{(1)}$ be the solution to \eqnref{eq:leadd0107}. If $\nu\cdot\bE|_+\neq 0$ as $\omega\rightarrow 0$ on $\p B$ and $(\bJ_1, \mu', \Gs_1)$ is an admissible two-layer structure, then $\epsilon_1=\epsilon_2$.
\end{thm}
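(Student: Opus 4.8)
The plan is to recover the two configurations in two stages, matching the two assertions: first the source $\bJ$ from the leading-order magnetic field, and then the permittivity $\epsilon$ from the second-order electric field, exploiting that once $\bJ$, $\mu$ and $\Gs$ are known to agree the lower-order terms are common to both. Since $\nabla\cdot\bJ_j=0$ and $\mu_1=\mu_2=\mu'$ is constant with $\Gs\neq0$, I would start from Lemma~\ref{le:lead01}, which gives $\bE_j=\Ocal(\omega)$ and identifies the leading magnetic field $\bH_j^{(0)}=(I_3-D^2\Vcal_B^0\tilde\mu)^{-1}\nabla\times\Vcal_B^0[\bJ_j]$, satisfying $\nabla\times\bH_j^{(0)}=\bJ_j$ and $\nabla\cdot\bH_j^{(0)}=0$ in $B$. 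Set $\mathbf{K}:=\bJ_1-\bJ_2$. Matching \eqref{eq:th2c2} at leading order, and using that the common value of $\mu$ makes both the tangential and the normal traces of $\bH_1^{(0)}$ and $\bH_2^{(0)}$ agree on $\p B$, the radiating difference $\mathbf{V}:=\bH_1^{(0)}-\bH_2^{(0)}$ is curl- and divergence-free in $\RR^3\setminus B$ with vanishing trace on $\p B$; exterior uniqueness then forces $\mathbf{V}\equiv0$ outside $B$, i.e. $\mathbf{K}$ generates no exterior magnetic field. Representing this exterior field through the volume potential (using $\nabla\times\Vcal_B^0[\mathbf{K}]=\Vcal_B^0[\nabla\times\mathbf{K}]$, valid since $\mathbf{K}$ is compactly supported in $B$) and expanding in multipoles exactly as in \eqref{eq:uniqueres01}--\eqref{eq:uniqueres03} forces the relevant harmonic moments of $\nabla\times\mathbf{K}$ to vanish; projecting onto $\xi$ and invoking the admissibility of $\Phi=\xi\cdot(\nabla\times\mathbf{K})$ (Definition~\ref{df:02}) gives $\xi\cdot(\nabla\times\mathbf{K})=0$ in $B$. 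I would then close, as at the end of the proof of Theorem~\ref{th:unique1}(ii), by combining this with $\nabla\cdot\mathbf{K}=0$ and $\mathbf{K}|_{\p B}=0$ to conclude $\mathbf{K}=0$, that is $\bJ_1=\bJ_2$.

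For the permittivity I would use that $\bJ$, $\mu'$ and $\Gs$ now coincide, so the leading electric field $\bE^{(1)}$ of \eqref{eq:leadd0107}---whose defining system involves only $\Gs$, $\mu$, $\bH^{(0)}$ and the boundary datum $\nu\cdot\bE|_+$, and \emph{not} $\epsilon$---is common to both configurations. Hence $\epsilon$ first surfaces at the next order, so I would expand $\bE_j=\omega\bE^{(1)}+\omega^2\bE_j^{(2)}+\Ocal(\omega^3)$ and insert it into the interface transmission conditions \eqref{eq:addinnertran1}. Collecting the order-$\omega$ terms gives, on each $\p\Sigma_j$, the identity $\nu\cdot(\epsilon^{(j)}\bE^{(1)}+i\Gs^{(j)}\bE_j^{(2)})|_+=\nu\cdot(\epsilon^{(j+1)}\bE^{(1)}+i\Gs^{(j+1)}\bE_j^{(2)})|_-$, which is where the permittivity enters. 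Subtracting the two configurations and setting $\mathbf{e}:=\bE_1^{(2)}-\bE_2^{(2)}$, the field $\mathbf{e}$ solves a transmission problem whose only inhomogeneities are the jumps $(\epsilon_1^{(j)}-\epsilon_2^{(j)})\,\nu\cdot\bE^{(1)}$ across $\p\Sigma_1$ and $\p B$, with $\mathbf{e}=0$ on $\p B$ since the data agree to all orders. Specialising to $N=2$, I would pair these interface identities with the densities $l_1,l_2\in L_0^2(\p\Sigma_1)$ of Definition~\ref{df:03}: the auxiliary potentials $g_{l_j}=u_j|_{\p B}$ defined through the harmonic problems \eqref{eq:dfadd0102} encode exactly how a density on $\p\Sigma_1$ is felt on $\p B$, so \eqref{eq:dfadd0101} converts the transmission identities into two scalar relations in the contrasts $\epsilon_1^{(1)}-\epsilon_2^{(1)}$ and $\epsilon_1^{(2)}-\epsilon_2^{(2)}$, with the constants $C_1\neq C_2$ entering as coefficients. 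Since $\nu\cdot\bE|_+\neq0$ as $\omega\to0$ keeps the coefficients from degenerating and $C_1\neq C_2$ makes the resulting $2\times2$ system nonsingular, I would conclude these contrasts vanish and, peeling inward by the same mechanism, $\epsilon_1=\epsilon_2$.

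The step I expect to be hardest is closing the source recovery: passing from the single scalar identity $\xi\cdot(\nabla\times\mathbf{K})=0$ to $\mathbf{K}=0$. Because there exist nonzero divergence-free currents supported in $B$ whose exterior field vanishes (non-radiating sources), the vanishing of one projection of $\nabla\times\mathbf{K}$ cannot by itself be decisive, and the conclusion must be driven by the full no-exterior-field information together with the algebraic structure furnished by the admissibility condition, as in Theorem~\ref{th:unique1}(ii); making this rigorous for a real $\xi\in\mathbb{S}^2$ is the delicate point. In the permittivity stage the corresponding difficulty is the clean bookkeeping of the second-order transmission problem across the interfaces and the verification that the densities $l_1,l_2$ and the extension operator in \eqref{eq:dfadd0102} genuinely decouple the layers; the hypothesis that $(\bJ_1,\mu',\Gs_1)$ be an admissible two-layer structure is designed precisely so that the associated linear system is invertible, so the main task there is to carry the asymptotics far enough that its coefficients match those appearing in \eqref{eq:dfadd0101}--\eqref{eq:dfadd0102}.
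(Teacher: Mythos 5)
Your plan follows the paper's own proof essentially step for step: the source is recovered by reducing to the argument of Theorem~\ref{th:unique1}(ii) (equality of the $\mu$'s makes the leading-order boundary identity yield $\Vcal_B^0[\nabla\times(\bJ_1-\bJ_2)]=0$ on $\p B$, then the admissibility of $\xi\cdot(\nabla\times(\bJ_1-\bJ_2))$ and the harmonic-moment argument, closed by the $\mathbb{M}(\xi)$ algebra), and the permittivity is recovered from the order-$\omega$ terms of the transmission conditions \eqnref{eq:addinnertran1} for $\bE_1^{(2)}-\bE_2^{(2)}$ paired against the densities $l_1,l_2$ of Definition~\ref{df:03} to produce a nonsingular $2\times 2$ system in the contrasts. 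The two points you flag as hardest are precisely where the paper invests its technical work: the source step is delegated to the proof of Theorem~\ref{th:unique1}(ii), and the ``conversion'' of the interface identities into the two scalar relations is carried out via the single-layer representation of $\bE_1^{(2)}-\bE_2^{(2)}=\nabla\tilde u$ on $\p B$ and $\p\Sigma_1$, the Neumann--Poincar\'e-type system on $L^2(\p B)\times L^2(\p\Sigma_1)$, the Calder\'on identity $\mathbb{S}\mathbb{K}^*=\mathbb{K}\mathbb{S}$, and the adjoint densities $(g,h)$ solving \eqnref{eq:rev040120}.
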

\begin{proof}
By following a similar proof in the second case of Theorem \ref{th:unique1}, one can show $\bJ_1=\bJ_2$ and $\bH_1-\bH_2=\Ocal(\omega)$ in $B$.
By Lemma \ref{le:lead01} one can expand $\bE_j$ and $\bH_j$ in $B$, $j=1, 2$, by
\beq\label{eq:rev040101}
\bE_j=\bE^{(1)}\omega+\bE_j^{(2)}\omega^2+\Ocal(\omega^3), \quad \bH_j=\bH^{(0)}+\bH_1^{(1)}\omega+\Ocal(\omega^2),
\eeq
where $\bH^{(0)}$ is defined in \eqnref{eq:leadingodH01} and
\beq\label{eq:rev040102}
\bE^{(1)}=\nabla u+\nabla\times\Psi,
\eeq
where $u$ and $\Psi$ are defined in \eqnref{eq:leadd0101} and \eqnref{eq:leadd010101}.
Then one has
\beq\label{eq:rev040103}
\nabla\times(\bE_1-\bE_2)=i\omega\mu'(\bH_1-\bH_2)=\Ocal(\omega^2), \quad \mbox{in} \quad B,
\eeq
and thus
\beq\label{eq:rev040104}
\nabla\times\nabla\times(\bH_1-\bH_2)=-i\omega\nabla\times(\epsilon_1\bE_1-\epsilon_2\bE_2)+\Gs\nabla\times(\bE_1-\bE_2)=\Ocal(\omega^2).
\eeq
Note that $\nabla\times\nabla=-\Delta+\nabla\nabla\cdot$ and $\nabla\cdot(\bH_1-\bH_2)=0$ in $B$, \eqnref{eq:rev040104} implies
\beq\label{eq:rev040105}
\Delta(\bH_1-\bH_2)=\Ocal(\omega^2).
\eeq
Combining \eqref{eq:rev040105} and the facts $\nu\times(\bH_1-\bH_2)=0$ and $\nu\cdot(\bH_1-\bH_2)=0$ on $\p B$, one can easily obtain that
$\bH_1-\bH_2=\Ocal(\omega^2)$. From \eqnref{eq:rev040103} one thus has $\nabla\times(\bE_1-\bE_2)=\Ocal(\omega^3)$, that is,
$\nabla\times(\bE_1^{(2)}-\bE_2^{(2)})=0$ in $B$. Since $\nabla\cdot\bJ=0$, there hold $\nabla\cdot\bE_j=0$ in $B\setminus\overline{\Sigma_1}$ and $\Sigma_1$, $j=1, 2$.
Next, by using the transmission condition \eqnref{eq:addinnertran1} one has
\beq\label{eq:rev040106}
\begin{split}
\nu\cdot(\epsilon_j^{(1)}+i\Gs_j^{(1)}\omega^{-1})\bE_j|_-&=\nu\cdot\epsilon_0\bE_j|_+\quad \mbox{on} \quad \p B,\\
\nu\cdot(\epsilon_j^{(2)}+i\Gs_j^{(2)}\omega^{-1})\bE_j|_-&=\nu\cdot(\epsilon_j^{(1)}+i\Gs_j^{(1)}\omega^{-1})\bE_j|_+ \quad \mbox{on} \quad \p \Sigma_1,
\end{split}
\eeq
Noting that $\Gs_1=\Gs_2$, one thus has
\beq\label{eq:rev040107}
\begin{split}
& i\Gs_1^{(1)}\nu\cdot(\bE_1^{(2)}-\bE_2^{(2)})|_-=-(\epsilon_1^{(1)}-\epsilon_2^{(1)})\nu\cdot\bE^{(1)}|_- \quad \mbox{on} \quad \p B,\\
& i\Gs_1^{(1)}\nu\cdot(\bE_1^{(2)}-\bE_2^{(2)})|_++(\epsilon_1^{(1)}-\epsilon_2^{(1)})\nu\cdot\bE^{(1)}|_+ \\
=& i\Gs_1^{(2)}\nu\cdot(\bE_1^{(2)}-\bE_2^{(2)})|_-+(\epsilon_1^{(2)}-\epsilon_2^{(2)})\nu\cdot\bE^{(1)}|_- \quad \mbox{on} \quad \p \Sigma_1.
\end{split}
\eeq
Define $\bE_1^{(2)}-\bE_2^{(2)}:=\nabla \tilde u$, then $\tilde u$ has the following form
\beq\label{eq:rev040108}
\tilde{u}=\left\{
\begin{split}
&\Scal_B^0[\phi_1]+\Scal_{\Sigma_1}^0[\phi_2] \quad \mbox{in} \quad B\setminus\overline{\Sigma_1},\\
&\Scal_{\Sigma_1}^0[\phi_3]\quad \mbox{in} \quad \Sigma_1,
\end{split}
\right.
\eeq
where $\phi_1$, $\phi_2$ and $\phi_3$ satisfy
\beq\label{eq:rev040109}
\left\{
\begin{split}
\Big(\frac{I}{2}+(\Kcal_B^0)^*\Big)[\phi_1]+\frac{\p}{\p \nu}\Scal_{\Sigma_1}^0[\phi_2]=\nu\cdot(\bE_1^{(2)}-\bE_2^{(2)})|_- &\quad \mbox{on} \quad \p B, \\
\frac{\p}{\p \nu}\Scal_{B}^0[\phi_1]+\Big(-\frac{I}{2}+(\Kcal_{\Sigma_1}^0)^*\Big)[\phi_2]=\nu\cdot(\bE_1^{(2)}-\bE_2^{(2)})|_+ &\quad \mbox{on} \quad \p \Sigma_1, \\
\Big(\frac{I}{2}+(\Kcal_{\Sigma_1}^0)^*\Big)[\phi_3]=\nu\cdot(\bE_1^{(2)}-\bE_2^{(2)})|_- &\quad \mbox{on} \quad \p \Sigma_1,
\end{split}
\right.
\eeq
and
\beq\label{eq:rev040110}
\left\{
\begin{split}
\Scal_B^0[\phi_1]+\Scal_{\Sigma_1}^0[\phi_2]=0 &\quad \mbox{on} \quad \p B, \\
\Scal_B^0[\phi_1]+\Scal_{\Sigma_1}^0[\phi_2]=\Scal_{\Sigma_1}^0[\phi_3] &\quad \mbox{on} \quad \p \Sigma_1.
\end{split}
\right.
\eeq
For notational convenience, we define
\beq\label{eq:rev040111}
\begin{split}
&f_1:=\nu\cdot(\bE_1^{(2)}-\bE_2^{(2)}), \quad e_1:=(\epsilon_1^{(1)}-\epsilon_2^{(1)})\nu\cdot\bE^{(1)}|_-,\quad \mbox{on} \quad \p B,\\
&f_2:=\nu\cdot(\bE_1^{(2)}-\bE_2^{(2)})|_+, \quad e_2:=(\epsilon_1^{(1)}-\epsilon_2^{(1)})\nu\cdot\bE^{(1)}|_+, \quad \mbox{on} \quad \p \Sigma_1,\\
&f_3:=\nu\cdot(\bE_1^{(2)}-\bE_2^{(2)})|_-,  \quad e_3:=(\epsilon_1^{(2)}-\epsilon_2^{(2)})\nu\cdot\bE^{(1)}|_-,\quad \mbox{on} \quad \p \Sigma_1.
\end{split}
\eeq
Let $\mathcal{H}=L^2(\p B)\times L^2(\p \Sigma_1)$ and the Neumann-Poincar\'e-type operator $\mathbb{K}^*: \mathcal{H}\rightarrow \mathcal{H}$ be
\beq\label{eq:rev040112}
\mathbb{K}^*:=
\left[
\begin{array}{cc}
-(\Kcal_B^0)^* & -\frac{\p}{\p \nu}\Scal_{\Sigma_1}^0 \\
\frac{\p}{\p \nu}\Scal_{B}^0 & (\Kcal_{\Sigma_1}^0)^*
\end{array}
\right]
\eeq
It is shown in \cite{ACKL14} that the $L^2$-adjoint of $\mathbb{K}^*$, namely $\mathbb{K}$, is given by
\beq\label{eq:rev040113}
\mathbb{K}:=
\left[
\begin{array}{cc}
-\Kcal_B^0 & \mathcal{D}_{\Sigma_1}^0 \\
-\mathcal{D}_{B}^0 & \Kcal_{\Sigma_1}^0
\end{array}
\right],
\eeq
where $\mathcal{D}_{\Sigma_1}^0$ and $\mathcal{D}_{B}^0$ are the double layer potential operators defined on $\p \Sigma_1$ and $\p B$, respectively.
Then the first two equations in \eqnref{eq:rev040109} can be rewritten in the form
\beq\label{eq:rev040114}
\Big(-\frac{1}{2}
\mathbb{I}+\mathbb{K}^*\Big)[\mathbf{p}]=\mathbf{f},
\eeq
where $\mathbf{p}:=(\phi_1, \phi_2)^T$, $\mathbf{f}:=(-f_1, f_2)^T$, and $\mathbb{I}$ is the identity operator on $\mathcal{H}$.
Define
\beq\label{eq:rev040115}
\mathbb{S}:=
\left[
\begin{array}{cc}
\Scal_B^0 & \Scal_{\Sigma_1}^0\\
\Scal_B^0 & \Scal_{\Sigma_1}^0
\end{array}
\right],
\eeq
then there holds the Calder\'on's identity $\mathbb{S}\mathbb{K}^*=\mathbb{K}\mathbb{S}$ (see \cite{ACKL14}). By applying $\mathbb{S}$ on both sides of  \eqnref{eq:rev040114} and using the Calder\'on's identity, one thus has
\beq\label{eq:rev040116}
\Big(-\frac{1}{2}
\mathbb{I}+\mathbb{K}\Big)\mathbb{S}[\mathbf{p}]=\mathbb{S}[\mathbf{f}].
\eeq
Similarly, by applying $\Scal_{\Sigma_1}^0$ on both sides of the third equation in \eqnref{eq:rev040109} one has
\beq\label{eq:rev040117}
\Big(\frac{I}{2}+\Kcal_{\Sigma_1}^0\Big)\Scal_{\Sigma_1}^0[\phi_3]=\Scal_{\Sigma_1}^0[f_3].
\eeq
By \eqnref{eq:rev040110}, one additionally has
\beq\label{eq:rev040118}
\mathbb{S}[\mathbf{p}]=(0, \Scal_{\Sigma_1}^0[\phi_3])^T.
\eeq
Combing \eqnref{eq:rev040116}-\eqnref{eq:rev040118} and \eqnref{eq:rev040107}, one can derive that
\beq\label{eq:rev040119}
\mathbb{S}^{-1}\Big(-\frac{1}{2}
\mathbb{I}+\mathbb{K}\Big)\mathbb{S}[\mathbf{p}]=(\Gs_1^{(1)})^{-1}\Gs_1^{(2)}\mathbf{q}+(i\Gs_1^{(1)})^{-1}\mathbf{e},
\eeq
where $\mathbf{q}=(0, (\Scal_{\Sigma_1}^0)^{-1}\Big(\frac{I}{2}+\Kcal_{\Sigma_1}^0\Big)\Scal_{\Sigma_1}^0[\phi_3])^T$, and
$\mathbf{e}=(e_1, e_3-e_2)^T$.

Suppose $l\in L_0^2(\p \Sigma_1)$, and define $m:=(\Gs_1^{(1)})^{-1}\Gs_1^{(2)}(\Scal_{\Sigma_1}^0)^{-1}\Big(\frac{I}{2}+\Kcal_{\Sigma_1}^0\Big)[l]$. Let $g, h\in L^2(\p B)$ be the solution to
\beq\label{eq:rev040120}
\left\{
\begin{split}
-\Big(\frac{I}{2}+\Kcal_B^0\Big)[g]-\Scal_B^0[h]=\Scal_{\Sigma_1}^0[m]-\mathcal{D}_{\Sigma_1}^0[l] &\quad \mbox{on} \quad \p B,\\
-\mathcal{D}_B^0[g]-\Scal_B^0[h]=\Scal_{\Sigma_1}^0[m]+\Big(\frac{I}{2}-\Kcal_{\Sigma_1}^0\Big)[l] &\quad \mbox{on} \quad \p \Sigma_1.
\end{split}
\right.
\eeq
We remark that $g$ and $h$ are uniquely solvable. In fact, from the first equation in \eqnref{eq:rev040120}, we have
\beq\label{eq:rev040121}
h=-(\Scal_{B}^0)^{-1}[h_1]-(\Scal_B^0)^{-1}\Big(\frac{I}{2}+\Kcal_B^0\Big)[g].
\eeq
where $h_1:=\Scal_{\Sigma_1}^0[m]-\mathcal{D}_{\Sigma_1}^0[l]$ on $\p B$.
By substituting \eqnref{eq:rev040121} into the second equation in \eqnref{eq:rev040120}, one has
\beq\label{eq:rev040122}
\left(-\mathcal{D}_B^0+\Scal_B^0(\Scal_B^0)^{-1}\Big(\frac{I}{2}+\Kcal_B^0\Big)\right)[g]=h_2 \quad \mbox{on} \quad \p \Sigma_1,
\eeq
where $$h_2:=\Scal_{\Sigma_1}^0[m]+\Big(\frac{I}{2}-\Kcal_{\Sigma_1}^0\Big)[l]-\Scal_{B}^0(\Scal_B^0)^{-1}[h_1] \quad \mbox{on} \quad \p \Sigma_1.$$
Note that the left side of \eqnref{eq:rev040122} can be extended uniquely to a harmonic function in $B$. By taking the trace on $\p B$ and using the jump formula one thus has
\beq\label{eq:rev040123}
g=u_c|_{\p B} \quad \mbox{on} \quad \p B,
\eeq
where $u_c$ is a harmonic function in $B$ and $u_c=h_2$ on $\p \Sigma_1$.
From \eqnref{eq:rev040120}, one can derive that
\beq\label{eq:rev040124}
\mathbb{S}^{-1}\Big(-\frac{1}{2}
\mathbb{I}+\mathbb{K}\Big)\left[
\begin{array}{c}
g \\
l
\end{array}
\right]
=\left[\begin{array}{c}
h \\
m
\end{array}
\right].
\eeq
Hence by taking the inner product in $\mathcal{H}$ on both sides of \eqnref{eq:rev040119} with $(g, l)^T$, one finally obtains
\beq\label{eq:rev040125}
\begin{split}
\int_{\p \Sigma_1}m\Scal_{\Sigma_1}^0[\phi_3]=&(\Gs_1^{(1)})^{-1}\Gs_1^{(2)}\int_{\p \Sigma_1}(\Scal_{\Sigma_1}^0)^{-1}\Big(\frac{I}{2}+\Kcal_{\Sigma_1}^0\Big)\Scal_{\Sigma_1}^0[\phi_3]l\\
&+(i\Gs_1^{(1)})^{-1}\Big(\int_{\p B}ge_1+\int_{\p \Sigma_1}l(e_3-e_2)\Big).
\end{split}
\eeq
By the definition of $m$, we thus have
\beq\label{eq:rev040126}
\int_{\p B}ge_1+\int_{\p \Sigma_1}l(e_3-e_2)=0,
\eeq
or equivalently
\beq\label{eq:rev040127}
\begin{split}
&(\epsilon_1^{(1)}-\epsilon_2^{(1)})\Big(\int_{\p B}g\nu\cdot\bE^{(1)}|_--\int_{\p \Sigma_1}l\nu\cdot\bE^{(1)}|_+\Big)\\
&+(\epsilon_1^{(2)}-\epsilon_2^{(2)})\int_{\p \Sigma_1}l\nu\cdot\bE^{(1)}|_-=0.
\end{split}
\eeq
Define $t_1:=(\epsilon_1^{(1)}-\epsilon_2^{(1)})$ and $t_2:=(\epsilon_1^{(2)}-\epsilon_2^{(2)})$. Since $(\bJ_1, \mu', \Gs_1)$ is an admissible two layer structure, by \eqnref{eq:dfadd0101} in Definition \ref{df:03}, one immediately has
\beq\label{eq:rev040128}
\begin{split}
&t_1(C_1-1)\int_{\p \Sigma_1}l_1\nu\cdot\bE^{(1)}|_++t_2\int_{\p \Sigma_1}l_1\nu\cdot\bE^{(1)}|_-=0,\\
&t_1(C_2-1)\int_{\p \Sigma_1}l_2\nu\cdot\bE^{(1)}|_++t_2\int_{\p \Sigma_1}l_2\nu\cdot\bE^{(1)}|_-=0.
\end{split}
\eeq
Finally, by using the relation $\Gs_1^{(1)}\nu\cdot\bE^{(1)}|_+=\Gs_1^{(2)}\nu\cdot\bE^{(1)}|_-$ on $\p \Sigma_1$, one can easily find that the equations \eqnref{eq:rev040128} with respect to $t_1$ and $t_2$ have unique solutions $t_1=0$ and $t_2=0$.

The proof is complete.
\end{proof}

\begin{rem}
In Theorem~\ref{th:unique2}, we show that if the source satisfies the admissibility condition and the EM medium parameters are piecewise constants, then the source term can be uniquely recovered by the boundary EM measurement. In the case that the EM inhomogeneity is of a two-layer structure and satisfies the admissibility condition \eqref{eq:dfadd0101}, then the EM medium parameters can also be recovered. We would like to remark the argument in the proof of Theorem~\ref{th:unique2} can be extended to deriving a similar unique recovery result for a general $N$-layer piecewise constant medium. In fact, by constructing $N-1$ equations similar to \eqnref{eq:rev040114} and then by some tedious computations, one should be able to derive a similar equation to \eqnref{eq:rev040127}. In such a case, one shall need to impose some similar assumptions to Definition \ref{df:03} on the EM configuration. The expression of those assumptions are too lengthy and hence we only present the result for the two-layer case. On the other hand, it is emphasized that we believe that the condition \eqref{eq:dfadd0101} is a generic one and this can be verified in a particular case within the spherical geometry.   

\end{rem}

\section*{Acknowledgment}
The work of Y. Deng was supported by NSF grant of China, No. 11601528, NSF grant of Hunan No. 2017JJ3432, Mathematics and Interdisciplinary Sciences Project, Central South University. The work of H. Liu was supported by the FRG and startup grants from Hong Kong Baptist University, Hong Kong RGC General Research Funds, 12302415 and 12302017. G. Uhlmann was partly supported by NSF.

\end{document}